\def\C{{\Bbb C}}
\def\1{{\bf 1}}
\newcommand\gotH{{\mathfrak{H}}}
\newcommand\dR{{\mathbb{R}}}
\newcommand\dC{{\mathbb{C}}}
\newcommand\dN{{\mathbb{N}}}
\newcommand{\gd}{{\delta}}
\newcommand{\gG}{{\Gamma}}
\newcommand{\gl}{{\lambda}}
\newcommand{\gr}{{\rho}}
\newcommand\gt{{\tau}}
\newcommand\cI{{\mathcal{I}}}
\newcommand\cL{{\mathcal{L}}}
\newcommand{\ba}{\begin{array}}
\newcommand{\ea}{\end{array}}
\newcommand{\bea}{\begin{eqnarray}}
\newcommand{\eea}{\end{eqnarray}}
\newcommand{\bead}{\begin{eqnarray*}}
\newcommand{\eead}{\end{eqnarray*}}
\newcommand{\be}{\begin{equation}}
\newcommand{\ee}{\end{equation}}
\newcommand{\bed}{\begin{displaymath}}
\newcommand{\eed}{\end{displaymath}}
\newcommand{\bt}{\begin{theorem}}
\newcommand{\et}{\end{theorem}}
\newcommand{\bl}{\begin{lemma}}
\newcommand{\el}{\end{lemma}}
\newcommand{\bc}{\begin{corollary}}
\newcommand{\ec}{\end{corollary}}
\newcommand{\br}{\begin{remark}}
\newcommand{\er}{\end{remark}}
\newcommand{\bd}{\begin{definition}}
\newcommand{\ed}{\end{definition}}
\newcommand{\bprop}{\begin{proposition}}
\newcommand{\eprop}{\end{proposition}}
\newcommand{\bh}{\begin{hypotheses}}
\newcommand{\eh}{\end{hypotheses}}
\newcommand{\bal}{\begin{aligned}}
\newcommand{\eal}{\end{aligned}}
\newcommand{\la}{\Label}
\newcommand{\Label}{\label}
\newcommand{\slim}{\,\mbox{\rm s-}\hspace{-2pt} \lim}
\newcommand{\wlim}{\,\mbox{\rm w-}\hspace{-2pt} \lim}
\newcommand{\vertiii}[1]{{\left\vert\kern-0.25ex\left\vert\kern-0.25ex\left\vert #1
    \right\vert\kern-0.25ex\right\vert\kern-0.25ex\right\vert}}
\def\dom{{\rm dom\,}}
\def\RE{{\rm Re\,}}
\def\dom{{\rm dom\,}}
\begin{document}

\newtheorem{lemma}{Lemma}[section]
\newtheorem{theorem}[lemma]{Theorem}
\newtheorem{corollary}[lemma]{Corollary}
\newtheorem{proposition}[lemma]{Proposition}

\theoremstyle{definition}

\newtheorem{definition}[lemma]{Definition}
\newtheorem{remark}[lemma]{Remark}
\newtheorem{exam}[lemma]{Example}

\numberwithin{equation}{section}


\begin{center}
{\large\bf Notes on the Chernoff Product Formula} \\[10pt]
{\large Valentin A. Zagrebnov}  \\[5mm]

Aix-Marseille Universit\'{e}, CNRS, Centrale Marseille\\
Institut de Math\'{e}matiques de Marseille (UMR 7373)\\
Centre de Math\'{e}matiques et Informatique - Technop\^{o}le Ch\^{a}teau-Gombert\\
39, rue F. Joliot Curie, 13453 Marseille Cedex 13, France  \\
e-mail {Valentin.Zagrebnov@univ-amu.fr}

\end{center}

\vspace{5mm}

{\sc Abstract}.
We revise the strong convergent Chernoff product formula and extend it, in a Hilbert space,
to convergence in the operator-norm topology. Main results deal with the self-adjoint
Chernoff product formula. The nonself-adjoint case concerns the quasi-sectorial contractions.
\tableofcontents
\let\thefootnote\relax\footnotetext{
\begin{tabular}{@{}l}
{\em Mathematics Subject Classification}. \\
47D05, 47A55, 81Q15, 47B65.\\
{\em Keywords}.
Strongly continuous semigroups, semigroup approximations, Chernoff product formula,\\
quasi-sectorial contractions,  Trotter-Kato product formulae.
\end{tabular}}
\section{The Chernoff product formula}\la{sec:3.1}
In paper \cite{Che68}, Chernoff has proved the following Theorem:
\begin{proposition}\label{prop:3.0.0}
Let $F(t)$ be a strongly continuous function from $[0,\infty )$ to the linear contractions on a Banach
space $\mathfrak{X}$ such that $F(0)=\mathds{1}$. Suppose that the closure $C$ of the strong derivative
$F'(+0)$ is the generator of a contraction semigroup. Then $F(t/n)^{n}$, $n \in \dN $, converges for
$t\geq 0$ and $n \rightarrow \infty $ to contraction $C_0$-semigroup $\{e^{t\,C}\}_{t\geq 0}$ in the
strong operator topology.
\end{proposition}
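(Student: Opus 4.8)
The plan is to factor the argument into two essentially independent pieces: a purely operator-theoretic comparison between the discrete product $F(t/n)^{n}$ and the ``Euler-type'' semigroup $e^{\,n(F(t/n)-\mathds{1})}$, and a convergence statement for the latter that follows from a Trotter--Kato approximation theorem. Fix $t\ge 0$ throughout (the case $t=0$ being trivial) and write $\mathcal{D}:=\dom F'(+0)$; since $C$ is by hypothesis the operator closure of $F'(+0)$, the set $\mathcal{D}$ is a core for $C$, hence norm-dense in $\mathfrak{X}$.

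First I would establish \emph{Chernoff's lemma}: for any linear contraction $V$ on $\mathfrak{X}$ and every $n\in\dN$,
\[
  \bigl\| e^{\,n(V-\mathds{1})}x - V^{n}x \bigr\| \;\le\; \sqrt{n}\,\bigl\| (V-\mathds{1})x \bigr\|,\qquad x\in\mathfrak{X}.
\]
The proof expands $e^{\,n(V-\mathds{1})}=e^{-n}\sum_{k\ge0}\tfrac{n^{k}}{k!}V^{k}$, so that $e^{\,n(V-\mathds{1})}x-V^{n}x=e^{-n}\sum_{k\ge0}\tfrac{n^{k}}{k!}(V^{k}-V^{n})x$; since $V$ is a contraction, a telescoping estimate gives $\|(V^{k}-V^{n})x\|\le|k-n|\,\|(V-\mathds{1})x\|$, and Cauchy--Schwarz against the Poisson weights $e^{-n}n^{k}/k!$ together with the variance identity $e^{-n}\sum_{k}\tfrac{n^{k}}{k!}(k-n)^{2}=n$ yields the bound. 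Applying this with $V=F(t/n)$ and $x\in\mathcal{D}$, and using $(F(t/n)-\mathds{1})x=(t/n)\,F'(+0)x+o(1/n)$, we get
\[
  \bigl\| e^{\,n(F(t/n)-\mathds{1})}x - F(t/n)^{n}x \bigr\| \;\le\; \sqrt{n}\cdot\Bigl(\tfrac{t}{n}\|Cx\|+o(\tfrac1n)\Bigr)\;\longrightarrow\;0 .
\]

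Next I would identify the limit of the Euler semigroups. Put $B_{n}:=n\bigl(F(t/n)-\mathds{1}\bigr)$; since $F(t/n)$ is a contraction one has $\|e^{\tau B_{n}}\|=e^{-n\tau}\,\|e^{\,n\tau F(t/n)}\|\le 1$ for all $\tau\ge0$, so each $B_{n}$ generates a contraction $C_{0}$-semigroup. For $x\in\mathcal{D}$, $B_{n}x=t\,(t/n)^{-1}(F(t/n)-\mathds{1})x\to t\,F'(+0)x=(tC)x$. As $tC$ generates the contraction semigroup $\{e^{\tau tC}\}_{\tau\ge0}$ and $\mathcal{D}$ is a core for $tC$, the Trotter--Kato approximation theorem gives $e^{\tau B_{n}}x\to e^{\tau tC}x$ for every $x\in\mathfrak{X}$, uniformly for $\tau$ in compact sets; taking $\tau=1$ yields $e^{B_{n}}x\to e^{tC}x$. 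Combining with the previous display and the triangle inequality, $F(t/n)^{n}x\to e^{tC}x$ for $x\in\mathcal{D}$; since $\sup_{n}\|F(t/n)^{n}\|\le1$ and $\mathcal{D}$ is dense, an $\e/3$-argument extends the convergence to all $x\in\mathfrak{X}$. Because the $o(1/n)$ above is locally uniform in $t$ and Trotter--Kato is uniform on compacts, one even obtains convergence uniform for $t$ in compact intervals.

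The main obstacle is the quantitative Chernoff lemma: everything else is soft once the factor $\sqrt{n}$ stands in front of $\|(V-\mathds{1})x\|$, which is exactly what turns $\sqrt{n}\cdot O(1/n)$ into something tending to $0$. A secondary point needing care is the bookkeeping around the core $\mathcal{D}$: one must use $C=\overline{F'(+0)}$ to know $\mathcal{D}$ is simultaneously dense and a core, so that the pointwise limit $B_{n}x\to tCx$ on $\mathcal{D}$ suffices to invoke Trotter--Kato and the final density extension is legitimate. If one prefers not to quote Trotter--Kato, the step $e^{B_{n}}x\to e^{tC}x$ can instead be deduced from strong resolvent convergence $(\gl-B_{n})^{-1}\to(\gl-tC)^{-1}$ (itself a consequence of $B_{n}x\to tCx$ on the core together with $\|(\gl-B_{n})^{-1}\|\le \gl^{-1}$) and a Laplace-transform representation of the semigroups.
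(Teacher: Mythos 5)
Your argument is correct, and it is in essence Chernoff's original 1968 proof; but it is genuinely different from the proof the paper actually writes out (the paper states Proposition \ref{prop:3.0.0} without proof and instead proves the resolvent-form variant, Proposition \ref{prop:3.1.0}). Both arguments share the two main ingredients: the $\sqrt{n}$-Lemma $\|e^{n(V-\mathds{1})}x - V^{n}x\|\le\sqrt{n}\,\|(V-\mathds{1})x\|$ (your Poisson-weight/telescoping derivation of it is the standard one and is fine), and a Trotter--Neveu--Kato type theorem identifying the limit of the Euler semigroups $e^{n(F(t/n)-\mathds{1})}$. Where you diverge is in how the factor $\sqrt{n}\,\|(F(t/n)-\mathds{1})x\|$ is killed: you restrict to $x\in\dom F'(+0)$, where the strong derivative gives $\|(F(t/n)-\mathds{1})x\|=O(1/n)$, and then extend by density using the uniform contraction bound; the paper instead applies the $\sqrt{n}$-Lemma to the regularized vectors $u_{n}=[\mathds{1}-C(t/n)/\sqrt{n}]^{-1}u$, for which $\sqrt{n}\,\|(F(t/n)-\mathds{1})u_{n}\|=t\,\|u_{n}-u\|\to0$ for \emph{every} $u\in\mathfrak{X}$. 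Your route is the natural one under the hypotheses of Proposition \ref{prop:3.0.0}, where a densely defined derivative with closure generating a semigroup is assumed, and it is slightly more elementary (convergence of bounded generators on a core, plus an $\e/3$ argument). The paper's regularization buys generality: it requires only strong resolvent convergence of $C(\tau)$, with no pointwise derivative on any dense set, which is exactly what is needed for the measurable-family version of Proposition \ref{prop:3.1.0}; Remark \ref{rem:3.2.0}(a) records that the two hypotheses are equivalent via the Trotter--Neveu--Kato theorem. The only minor bookkeeping point in your write-up is the invocation of Trotter--Kato in its ``convergence of generators on a core'' form; since each $B_{n}$ is bounded and dissipative and $\mathcal{D}$ is a core for $tC$, the range condition needed there is automatic, so the step is legitimate as you state it.
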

Note, that by condition of Proposition \ref{prop:3.0.0} the operator $C(\tau): = (F(\tau) - \mathds{1})/\tau$
for $\tau > 0$ is dissipative and $F'(+0)$ is derivative $\lim_{\tau\rightarrow +0} C(\tau)$ in the
strong resolvent sense. We also recall another version of this proposition, see \cite{Che74} (Theorem 1.1).
\begin{proposition}\label{prop:3.1.0}
Let $t \mapsto F(t)$ be a measurable operator-valued function from $[0,\infty)$ to the
linear contractions on a Banach space $\mathfrak{X}$ such that $F(0)=\mathds{1}$. If
\begin{equation}\label{eq:0.1}
\slim_{\gt\to+0}(\mathds{1} - C(\gt))^{-1} = (\mathds{1} - C)^{-1} \, ,
\end{equation}
in strong operator topology, then
\be\la{eq:0.2}
\slim_{n\to+\infty}F(t/n)^n = e^{t\,C}\, ,
\ee
for all $t\geq 0$ uniformly on bounded $t$-intervals.
\end{proposition}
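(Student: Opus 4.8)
The plan is to combine two classical ingredients: the elementary Chernoff inequality comparing $S^{n}$ with $e^{n(S-\mathds{1})}$ for a contraction $S$, and the Trotter--Kato approximation theorem, which upgrades the strong resolvent convergence \eqref{eq:0.1} into strong convergence of the associated semigroups. I would write $G_{\tau}:=C(\tau)=(F(\tau)-\mathds{1})/\tau$ for $\tau>0$; since $F(\tau)$ is a contraction, $G_{\tau}$ is a \emph{bounded} dissipative operator, hence the generator of a contraction $C_{0}$-semigroup $s\mapsto e^{sG_{\tau}}$, and one has the exact identity $e^{\,n(F(t/n)-\mathds{1})}=e^{\,t\,G_{t/n}}$ for $t>0$. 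Here $C$ is the generator of the limiting contraction $C_{0}$-semigroup $\{e^{tC}\}_{t\ge0}$, so that $\dom C=\ran(\mathds{1}-C)^{-1}$ is dense.

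First I would record the Trotter--Kato step: applied to the family $\{e^{sG_{\tau}}\}_{\tau>0}$ of contraction semigroups, \eqref{eq:0.1} gives $\slim_{\tau\to+0}e^{sG_{\tau}}x=e^{sC}x$ for all $x\in\mathfrak{X}$, uniformly for $s$ in bounded intervals. Next I would prove (or simply cite) Chernoff's lemma: for every contraction $S$ and $n\in\dN$,
\[
\|S^{n}x-e^{n(S-\mathds{1})}x\|\le\sqrt{n}\,\|(S-\mathds{1})x\|,\qquad x\in\mathfrak{X}.
\]
Its proof is short: expand $e^{n(S-\mathds{1})}=e^{-n}\sum_{k\ge0}\frac{n^{k}}{k!}S^{k}$, use the telescoping bound $\|(S^{n}-S^{k})x\|\le|n-k|\,\|(S-\mathds{1})x\|$ (valid since $\|S\|\le1$ and $S$ commutes with $S-\mathds{1}$), and apply Cauchy--Schwarz together with $e^{-n}\sum_{k}\frac{n^{k}}{k!}(k-n)^{2}=n$ (the variance of a Poisson law). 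Taking $S=F(t/n)$ and using $e^{n(F(t/n)-\mathds{1})}=e^{t\,G_{t/n}}$ turns this into
\[
\|F(t/n)^{n}x-e^{\,t\,G_{t/n}}x\|\ \le\ \frac{t}{\sqrt{n}}\,\|G_{t/n}x\|.
\]

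The hard part is that \eqref{eq:0.1} provides \emph{no} bound on $\|G_{\tau}x\|$ as $\tau\to+0$, so the right-hand side above is not directly small for a general $x$. The device I would use is to evaluate the inequality not at $x$ but at resolvent-regularised vectors. Fix $y\in\mathfrak{X}$ and set $x_{n}:=(\mathds{1}-G_{t/n})^{-1}y$ and $x:=(\mathds{1}-C)^{-1}y$. Then $G_{t/n}x_{n}=x_{n}-y$, and dissipativity ($\|x_{n}\|\le\|y\|$) gives $\|G_{t/n}x_{n}\|\le2\|y\|$, so the last inequality yields $\|F(t/n)^{n}x_{n}-e^{t G_{t/n}}x_{n}\|\le2t\|y\|/\sqrt{n}\to0$, uniformly for $t\in[0,T]$. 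Since $x_{n}\to x$ by \eqref{eq:0.1} — uniformly for $t\in[0,T]$, because $t/n\to0$ uniformly there — and $\|F(t/n)^{n}\|\le1$, $\|e^{tG_{t/n}}\|\le1$, I can replace $x_{n}$ by $x$ at the cost of a vanishing error; combined with the Trotter--Kato step $e^{tG_{t/n}}x\to e^{tC}x$ this gives $F(t/n)^{n}x\to e^{tC}x$ for every $x\in\dom C=\ran(\mathds{1}-C)^{-1}$, uniformly on $[0,T]$. Finally, since $\dom C$ is dense and $\sup_{n}\|F(t/n)^{n}\|\le1$, an equiboundedness/$3\e$ argument extends the convergence to all of $\mathfrak{X}$, uniformly on bounded $t$-intervals, which is \eqref{eq:0.2}.

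I expect the main obstacle to be exactly this absence of any control on $\|C(\tau)x\|$ as $\tau\to+0$: in the strongly differentiable situation of Proposition \ref{prop:3.0.0} one has a core on which $C(\tau)\to C$ strongly, so the $\sqrt{n}$ loss in Chernoff's lemma is harmless, whereas here only the resolvents converge and one must absorb that loss by testing against $(\mathds{1}-C(t/n))^{-1}y$. A secondary technical point is to confirm that the strong limit $(\mathds{1}-C)^{-1}$ really is the resolvent of a generator of a contraction $C_{0}$-semigroup with dense domain — this is the pseudoresolvent half of the Trotter--Kato theorem; in the fully general Banach-space setting one can only assert \eqref{eq:0.2} on the closed subspace $\overline{\dom C}$.
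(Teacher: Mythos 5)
Your proposal is correct and follows essentially the same route as the paper: the Trotter--Neveu--Kato theorem applied to the contraction semigroups $e^{sC(\tau)}$, Chernoff's $\sqrt{n}$-lemma, and absorption of the $\sqrt{n}$ loss by evaluating at resolvent-regularised vectors. The only (harmless) deviation is the choice of regulariser: the paper takes $u_n=[\mathds{1}-C(t/n)/\sqrt{n}]^{-1}u$, which converges to $u$ itself, so the conclusion is obtained for every $u\in\mathfrak{X}$ directly, whereas your $(\mathds{1}-C(t/n))^{-1}y$ converges to $(\mathds{1}-C)^{-1}y\in\dom C$ and therefore needs the closing density/equiboundedness step --- both variants are valid.
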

%
\begin{proof} The proof needs two ingredients. The first is the \textit{Trotter-Neveu-Kato} theorem
(\cite{Kat80}, Ch.IX, Theorem 2.16): the strong convergence in (\ref{eq:0.1}) yields for contractions
$\{e^{t\, C(\gt)}\}_{t\geq 0}$
\be\la{eq:0.3}
\lim_{\gt\to+0}e^{t\, C(\gt)} u = e^{t\,C} u \, ,
\ee
for all $u \in \mathfrak{X}$, locally uniformly on closed intervals $\cI \subset \dR^+_{0}$.

Now, for any  $t > 0$ and $u \in \mathfrak{X}$ we define $\{u_n :=
[\mathds{1} - C(t/n)/\sqrt{n}]^{-1} u \}_{n\geq1}$. Then by (\ref{eq:0.3})
\begin{equation}\la{eq:0.4}
\lim_{n \rightarrow \infty } u_n :=
\lim_{n \rightarrow \infty}\sqrt{n} \int_{0}^{\infty} ds \ e^{- \sqrt{n}\, s}\ e^{s\, C(t/n)}\, u = u .
\end{equation}
The second ingredient comes from the, so-called, $\sqrt{n}$-Lemma (\cite{Che68} Lemma 2). It yields the
estimate
\begin{equation}\label{eq:0.5}
\|e^{t \, C(t/n)}\, w - F(t/n)^n \, w\| \leq \sqrt{n} \ \|(F(t/n) - \mathds{1})\, w\| \, ,
\end{equation}
for any $w\in \mathfrak{X}$. Then (\ref{eq:0.4}) and (\ref{eq:0.5}) imply
\begin{eqnarray*}
&& \lim_{n \rightarrow \infty }\|e^{t \, C(t/n)}\, u - F(t/n)^n \, u\| =
\lim_{n \rightarrow \infty }\|e^{t \, C(t/n)}\, u_n - F(t/n)^n \, u_n \| \\
&& \leq \lim_{n \rightarrow \infty } t/\sqrt{n} \ \|C(t/n)\, [\mathds{1} - C(t/n)/\sqrt{n}]^{-1}\, u\| =
\lim_{n \rightarrow \infty } t\, \|u_n - u\| = 0\,.
\end{eqnarray*}
Hence, by (\ref{eq:0.3}) together with estimate
\begin{equation*}
\|e^{t\,C} u - F(t/n)^n u\|\leq \|e^{t\,C} u  - e^{t\, C(t/n)} u\| +
\|e^{t \, C(t/n)}\, u - F(t/n)^n \, u\| \, ,
\end{equation*}
for any $u\in \mathfrak{X}$, we obtain (\ref{eq:0.2}).
\end{proof}
\begin{remark}\label{rem:3.2.0}
(a) Equivalence of condition (\ref{eq:0.1}) to the existence of the strong derivative $F'(+0)$ is
a part of the \textit{Trotter-Neveu-Kato} theorem, \cite{EN00} Ch.III, Sects.4.8 and 4.9.\\
(b) For analysis of optimality, generalisation and improvement of the $\sqrt{n}$-Lemma, see \cite{Zag17}.
\end{remark}
\begin{definition}\label{def:3.3.0}
Equation (\ref{eq:0.2}) is called the \textit{Chernoff product formula}, or the {Chernoff
approximation formula}, in the strong operator topology for contraction $C_0$-semigroup
$\{e^{t\,C}\}_{t\geq 0}$.
\end{definition}

The aim of the present Notes is \textit{lifting} the strongly convergent Chernoff product formula to
formula convergent in the operator norm topology, whereas a majority of results concerns only strong
convergence, see, e.g., a detailed review \cite{But19}.
Our main results are focused on analysis of the \textit{operator-norm} convergence of the
Chernoff product formula in a Hilbert space $\gotH$, see Section \ref{sec:3.2}.
In Section \ref{sec:3.3} we establish estimates of the operator-norm \textit{rate} of convergence for
\textit{self-adjoint} Chernoff product formula.
The case of Chernoff product formula for {nonself-adjoint} \textit{quasi-sectorial} contractions
is the subject of Section \ref{sec:3.4}. The Trotter-Kato product formula is a direct application
of the Chernoff product formula, see Section \ref{sec:3.5}.

We conclude this section by the proof of our first \textit{Note}: for strongly convergent Chernoff
product formula the \textit{self-adjointness} allows to relax condition (\ref{eq:0.1}) to the weak
operator convergence. To this aim we recall a \textit{lifting} topology assertion in $\gotH \,$:
\begin{proposition}\la{lem:3.1.1+1} Let $\{u_n\}_{n\geq1}$ be a weakly convergent sequence of vectors,
$\wlim_{n\to\infty} u_n = u$, in a Hilbert space $\gotH$. If , in addition,
$\lim_{n\to\infty} \|u_n\| = \|u\|$, then $\lim_{n\to\infty} \|u_n - u\| =0$.
\end{proposition}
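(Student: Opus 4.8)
The plan is to reduce everything to the identity that governs convergence in an inner-product space: for each $n$,
\[
\|u_n - u\|^2 = \|u_n\|^2 - 2\,\RE\lag u_n, u\rag + \|u\|^2 .
\]
This is the only structural fact one needs, and it is precisely where the Hilbert-space geometry enters; in a general Banach space no such identity is available, which is why the statement is special to $\gotH$. So the first step I would take is to write out this expansion and then examine the behaviour of each of the three terms on the right-hand side as $n\to\infty$.

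For the middle term I would invoke the hypothesis $\wlim_{n\to\infty} u_n = u$: testing weak convergence against the fixed vector $u$ gives $\lag u_n, u\rag \to \lag u, u\rag = \|u\|^2$, hence $\RE\lag u_n,u\rag \to \|u\|^2$ as well. For the first term I would use the additional hypothesis $\|u_n\|\to\|u\|$, which immediately yields $\|u_n\|^2 \to \|u\|^2$. The third term is constant in $n$.

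Combining these, the right-hand side of the displayed identity converges to $\|u\|^2 - 2\|u\|^2 + \|u\|^2 = 0$, and therefore $\|u_n - u\|^2 \to 0$, i.e. $\lim_{n\to\infty}\|u_n - u\| = 0$, which is the assertion. There is essentially no obstacle here: the whole content is the algebraic identity above together with the definition of weak convergence. The only point worth stating explicitly is that one must take real parts (or note that $\lag u_n,u\rag$ and $\lag u,u_n\rag$ both converge to $\|u\|^2$), so that the cross term is handled correctly in the complex case; in the real case it is automatic. I would present the argument in these three short lines and regard the proof as complete.
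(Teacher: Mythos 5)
Your proof is correct and complete: the polarization-type expansion $\|u_n-u\|^2=\|u_n\|^2-2\,\RE\lag u_n,u\rag+\|u\|^2$ together with testing the weak convergence against the fixed vector $u$ is the standard (essentially the only) argument for this fact, and you handle the complex case properly by taking real parts. The paper states this proposition without proof, as a known lifting result, so there is nothing to compare beyond noting that your argument is exactly the one the paper implicitly relies on.
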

This assertion implies the following statement.
\bl\la{lem:3.1.3}
Let $S: \dR^+ \rightarrow \cL(\gotH)$ be a measurable family of non-negative
self-adjoint operators and let $H$ be non-negative self-adjoint operator. If the weak operator
limit
\be\la{eq:3.1.11}
\wlim_{\gt\to+0}(\gl \mathds{1} + S(\gt))^{-1} = (\gl \mathds{1} +H)^{-1}\, ,
\ee
for each $\gl > 0$, then it is also true in the strong operator topology {\rm{:}}
\be\la{eq:3.1.12}
\slim_{\gt\to+0}(\lambda \mathds{1} + S(\gt))^{-1} = (\gl \mathds{1} +H)^{-1}\, .
\ee
\el
%
\begin{proof}
From \eqref{eq:3.1.11} we get
\be\la{eq:3.1.13}
\lim_{\gt\to+0}\|(\gl \mathds{1} + S(\gt))^{-1/2}u\|^2 = \|(\gl \mathds{1} +H)^{-1/2}u\|^2 ,
\quad u \in  \gotH,
\ee
for $\gl \ge 1$. Since
\begin{equation*}
(\mathds{1} +S(\gt))^{-1/2} = \frac{1}{\pi}\int^\infty_0 d\eta \ \frac{1}{\sqrt{\eta}}\
(\eta \, \mathds{1} + \mathds{1} + S(\gt))^{-1}\, ,
\end{equation*}
\eqref{eq:3.1.11} also yields $\wlim_{\gt\to +0}(\mathds{1} + S(\gt))^{-1/2} =
(\mathds{1} +H)^{-1/2}$. This, together with \eqref{eq:3.1.13} and the lifting Proposition \ref{lem:3.1.1+1},
implies $\slim_{\gt\to +0}(\mathds{1} + S(\gt))^{-1/2} = (\mathds{1} +H)^{-1/2}$.
Then strong convergence of the product of operators yields \eqref{eq:3.1.12}.
\end{proof}
By Lemma \ref{lem:3.1.3}, the conditions of Proposition \ref{prop:3.1.0} for the
family of operators
\be\la{eq:3.1.1}
S(\gt) := \frac{\mathds{1}- F(\gt)}{\gt}\, , \quad \gt > 0 \, ,
\ee
can be reformulated in a Hilbert space $\gotH$. Then we get a stronger assertion:
%
\bt\la{cor:3.1.4}
Let $F: \dR^+ \longrightarrow \cL(\gotH)$ be a measurable
family of non-negative self-adjoint contractions such that $F(0) = \mathds{1}$ and let $H\geq 0$
be a self-adjoint operator in $\gotH$. Then
\be\la{eq:3.1.7}
\slim_{n\to+\infty}F(t/n)^n = e^{-tH}\, ,
\ee
if and only if for each $\gl > 0$ the condition
\be\la{eq:3.1.15}
\wlim_{\gt\to +0} (\gl \mathds{1} + S(\gt))^{-1} = (\gl \mathds{1} +H)^{-1}\, ,
\ee
is satisfied.
\et
%
\begin{proof}
If \eqref{eq:3.1.15} is valid, then applying Lemma \ref{lem:3.1.3} we verify \eqref{eq:3.1.12}. Using
Proposition \ref{prop:3.1.0} in a Hilbert space $\gotH$ for $C(\tau) = - S(\gt)$, $C = -H$ and
\eqref{eq:3.1.12} for $\lambda =1$, we prove the self-adjoint Chernoff product formula \eqref{eq:3.1.7}.

To prove the converse we use the representation
\be\la{eq:3.1.150}
e^{-tS(t/n)} - e^{-tH} = F(t/n)^n - e^{-tH} + e^{-tS(t/n)} - F(t/n)^n \, .
\ee
Then $\lim_{n\to+\infty}\|(F(t/n)^n - e^{-tH})u\| = 0$ by condition (\ref{eq:3.1.7}), whereas
by the spectral functional calculus for self-adjoint operator $F(\gt)$ we obtain the
\textit{operator norm} estimate
\be\la{eq:3.1.151}
\|F(t/n)^n - e^{-n(\mathds{1}- F(t/n))}\| = \left\|\int_{[0,1]} dE_{F(t/n)}(\gl) \left(\gl^n
 -e^{-n(1-\gl)}\right)\right\| \leq \frac{1}{n} \ .
\ee
Therefore, (\ref{eq:3.1.150}) yields for $\tau = t/n$ the limit
\be\la{eq:3.1.3}
\slim_{\gt\to+0}e^{-tS(\gt)} = e^{-tH} \, ,
\ee
for $t\in \dR^+$. Note that by the self-adjoint \textit{Trotter-Neveu-Kato} convergence theorem
the limit (\ref{eq:3.1.3}) is equivalent to
\be\la{eq:3.1.2}
\slim_{\gt\to+0}(\mathds{1} + S(\gt))^{-1} = (\mathds{1} +H)^{-1} \, .
\ee
Since (\ref{eq:3.1.2}) is, in turn, equivalent to
\begin{equation*}
\slim_{\gt\to +0}(\gl \mathds{1} +S(\gt))^{-1} = (\gl \mathds{1} +H)^{-1}\, ,
\end{equation*}
for $\gl > 0$, the latter yields the weak limit  \eqref{eq:3.1.15}.
\end{proof}
\begin{remark}\label{rem:3.2.1}
(a) We note that due to self-ajointness of the family $\{F(t)\}_{t\geq0}$ the estimate (\ref{eq:3.1.151})
is stronger than the $\sqrt{n}$-estimate (\ref{eq:0.5}) for general contractions.\\
(b) By definition of $\{F(t)\}_{t\geq 0}$ and by condition (\ref{eq:3.1.15})
the $C_0$-semigroup property of $\{e^{-tH}\}_{t\geq 0}$ ensures that strong limits $\slim_{t\to +0}$ of
the left- and the right-hand sides of \eqref{eq:3.1.7} are well-defined and coincide with $\mathds{1}$.
Similarly, since by (\ref{eq:3.1.3}) the limits in $\lim_{t\to+0} \slim_{\gt\to+0}e^{-tS(\gt)}$
commute, one gets that $t \in \dR^+_{0}$.
\end{remark}
%

\section{{Lifting the Chernoff product formula to operator-norm topology}}\la{sec:3.2}

The first result about lifting the Chernoff product formula to the operator-norm topology
is due to \cite{NZ99a}, Theorem 2.2. Our next \textit{Note} will be about improvement of this result
by extension to any closed $t$-interval $\cI \subset \dR^{+}_{0}$.

Similarly to the case of the strong operator topology our strategy includes the estimate of two
ingredients (\ref{eq:0.3}) and (\ref{eq:0.5}) involved into Proposition \ref{prop:3.1.0}, but now
in the \textit{operator norm} topology. Since the second ingredient has estimate (\ref{eq:3.1.151}),
it rests to find conditions for \textit{lifting} to operator norm convergence of the limit
(\ref{eq:3.1.3}) in the {Trotter-Neveu-Kato} convergence theorem.
We proceed with the following lemma.
\bl\la{lem:3.2.1}
Let $K$ and $L$ be non-negative self-adjoint operators in a Hilbert
space $\gotH$. Then
\be\la{eq:3.2.1}
\|e^{-K} - e^{-L}\| \le c \ \|(\mathds{1} +K)^{-1} - (\mathds{1} +L)^{-1}\|
\ee
with a constant $c > 0 $ independent of operators $K$ and $L$.
\el
By the Riesz-Dunford functional calculus one obtains representation:
\be\la{eq:3.2.1-1}
e^{-K} - e^{-L} = \frac{1}{2\pi i}\int_\gG dz \;
e^{z}\left((z + K)^{-1} - (z + L)^{-1}\right).
\ee
Essentially, the line of reasoning is based on straightforward estimates that use (\ref{eq:3.2.1-1})
for a given contour $\Gamma$. Then arguments show that constant $c$ is only $\Gamma$-dependent.
We skip the proof.

By virtue of (\ref{eq:3.1.151}) and Lemma \ref{lem:3.2.1} the \textit{operator-norm} Trotter-Neveu-Kato
theorem needs \textit{lifting} of the strong convergence in \eqref{eq:3.1.2} to the operator-norm
convergence.
%
\bl\la{lem:3.2.2}
Let $F: \dR^{+}_{0} \longrightarrow \cL(\gotH)$ be a measurable family of non-negative self-adjoint
contractions such that $F(0) = \mathds{1}$. Let self-adjoint family $\{S(\tau)\}_{\tau > 0}$ be
defined by \emph{(\ref{eq:3.1.1}) and (\ref{eq:3.1.2})} for non-negative self-adjoint operator
$H$ in $\gotH$. Then condition
\be\la{eq:3.2.6}
\lim_{\gt\to+0}\|(\mathds{1} +S(\gt))^{-1} - (\mathds{1} +H)^{-1}\| = 0 ,
\ee
is satisfied if and only if
\be\la{eq:3.2.7}
\lim_{\gt\to+0}\sup_{t\in\cI}\|(\mathds{1} +t S(\tau))^{-1} - (\mathds{1} +t H)^{-1}\| = 0 ,
\ee
for any closed interval $\cI \subset \dR^+$.
\el
%
\begin{proof}
A straightforward computation shows that
\begin{equation}\la{eq:3.2.71}
\begin{split}
&(\mathds{1} +tS(\gt))^{-1} - (\mathds{1} +tH)^{-1}\\
&= t(\mathds{1} +S(\gt))(\mathds{1} +tS(\gt))^{-1}[(\mathds{1} +S(\gt))^{-1} -
(\mathds{1} +H)^{-1}](\mathds{1} +H)(\mathds{1} +tH)^{-1}\, .
\end{split}
\end{equation}
Here we used that if $t > 0$ and $\gt > 0$ then for self-adjoint operator $S(\gt)$ the closure
\begin{equation*}
\overline{(\mathds{1} + t \, S(\gt))^{-1} (\mathds{1} +S(\gt))}=
(\mathds{1} + S(\gt))(\mathds{1} + t \, S(\gt))^{-1} \, .
\end{equation*}
For these values of arguments $t$ and $\tau$ we obtain estimates:
\begin{eqnarray*}
&&\|(\mathds{1} +S(\gt))(\mathds{1} + t\, S(\gt))^{-1}\| \le (1 + 2/t)\, , \\
&&\|(\mathds{1} +H)(\mathds{1} + t\, H)^{-1}\| \le (1 + 2/t)\, .
\end{eqnarray*}

If $\cI$ is a closed interval of $\dR^+$, e.g., $\cI := [a,b]$ for
$0 < a < b < \infty$, then by (\ref{eq:3.2.71})
\begin{equation}\la{eq:3.2.8}
\|(\mathds{1} +tS(\gt))^{-1} - (\mathds{1} +tH)^{-1}\|
\le {b}(1 + 2/a)^{2} \|(\mathds{1} +S(\gt))^{-1} - (\mathds{1} +H)^{-1}\|,
\end{equation}
for $t \in [a,b]$ and $\gt > 0$. By \eqref{eq:3.2.6}
the estimate \eqref{eq:3.2.8} yields \eqref{eq:3.2.7}. The converse is obvious.
%
\end{proof}

%
\bt\la{th:3.2.3}
Let $F: \dR^{+}_{0} \longrightarrow \cL(\gotH)$ be a measurable family of non-negative
self-adjoint contractions such that $F(0) = \mathds{1}$. Let self-adjoint family
$\{S(\tau)\}_{\tau > 0}$ be defined by \emph{(\ref{eq:3.1.1}) and (\ref{eq:3.1.2})} for
self-adjoint operator $H \geq 0$ in $\gotH$. Then we have
\be\la{eq:3.2.9}
\lim_{n\to \infty}\sup_{t\in\cI}\|F(t/n)^n - e^{-tH}\| = 0 \, ,
\ee
for any closed interval $\cI \subset \dR^+$, if and only if the family $\{S(\tau)\}_{\tau > 0}$
satisfies condition \eqref{eq:3.2.6}.
\et
%
\begin{proof}
For $t > 0$ and $n \geq 1$ we get estimate
\be\la{eq:3.2.10}
\|F(t/n)^n - e^{-tH}\| \le\|F(t/n)^n - e^{-tS(t/n)}\| + \|e^{-tS(t/n)} - e^{-tH}\|.
\ee
%
%
Then (\ref{eq:3.1.151}) and (\ref{eq:3.2.10}) imply
\be\la{eq:3.2.11}
\|F(t/n)^n - e^{-tH}\| \le \frac{1}{n} + \|e^{-t S(t/n)} - e^{-tH}\|, \quad t > 0,
\quad n \geq 1 \ .
\ee

Note that by Lemma \ref{lem:3.2.1} there is a constant $c > 0$ such that for $t > 0$
\be\la{eq:3.2.12}
\|e^{-tS(t/n)} - e^{-tH}\| \le c \ \|(\mathds{1} +t S(t/n))^{-1} - (\mathds{1} +tH)^{-1}\| .
\ee
 Inserting the estimate \eqref{eq:3.2.12} into \eqref{eq:3.2.11} we obtain
\be\la{eq:3.2.13}
\|F(t/n)^n - e^{-tH}\| \le \frac{1}{n} +
c \, \|(\mathds{1} +tS(t/n))^{-1} - (\mathds{1} +tH)^{-1}\|,
\ee
Then (\ref{eq:3.2.13}) and Lemma \ref{lem:3.2.2} yield \eqref{eq:3.2.9}.

Conversely, let us assume \eqref{eq:3.2.9}. For $t > 0$ and $n \geq 1$ we have estimate
\begin{equation*}
\|e^{-tS(t/n)} - e^{-tH}\| \le \|F(t/n)^n  - e^{-tH}\| +
\|F(t/n)^n - e^{-tS(t/n)}\| \ ,
\end{equation*}
and by (\ref{eq:3.1.151})
\be\la{eq:3.2.14}
\|e^{-t S(t/n)} - e^{-tH}\| \le \|F(t/n)^n - e^{-tH}\| + \frac{1}{n} \ .
\ee

Then by assumption \eqref{eq:3.2.9} for any closed interval $\cI \subset \dR^+$ the estimate
\eqref{eq:3.2.14} yields
\begin{equation*}
\lim_{n\to\infty}\sup_{t\in\cI}\|e^{-tS(t/n)} - e^{-tH}\| = 0 \, .
\end{equation*}
Hence, $\lim_{n\to\infty}\|e^{-tS(t/n)} - e^{-tH}\| = 0$ implies
$\lim_{\gt\to+0}\|e^{-tS(\gt)} - e^{-tH}\| = 0$ for any $t > 0$.

Now, using representation:
\begin{equation}\la{eq:3.2.14-2}
(\mathds{1} +S(\gt))^{-1} - (\mathds{1} +H)^{-1} =
\int^\infty_0 ds \ e^{-s}\ \big(e^{-s \, S(\gt)} - e^{-s\, H}\big) \ ,
\end{equation}
we obtain the estimate
\begin{equation}\la{eq:3.2.14-1}
 \|(\mathds{1} +  S(\gt))^{-1} - (\mathds{1} + H)^{-1}\| \le \int^\infty_0  ds
 \ e^{-s}\ \big\|e^{-s S(\gt)} - e^{-s H}\big\| \, .
\end{equation}
Let $\Phi_{\tau}(s):= e^{-s}\ \big\|e^{-s S(\gt)} - e^{-s H}\big\|$. Since $S(\gt)\geq 0$ and $H \geq 0$,
one gets $\Phi_{\tau}(s) \leq 2 \, e^{-s} \in L^{1}(\dR^+)$ and $\lim_{\gt\to+0}\Phi_{\tau}(s) = 0$.
Then $\lim_{\gt\to+0}$ in the right-hand side of (\ref{eq:3.2.14-1}) is zero by the Lebesgue
{dominated convergence} theorem, that yields \eqref{eq:3.2.6}.
\end{proof}
Extension of this statement to \textit{any} bounded interval $\cI \subset \dR^{+}_{0}$
(cf. Remark \ref{rem:3.2.1}(b)) needs a \textit{uniform} operator-norm extension of the
Trotter-Neveu-Kato theorem, that we present below.
\bt\la{th:3.2.5}
Let $F: \dR^{+}_{0} \longrightarrow \cL(\gotH)$ be a measurable family of non-negative self-adjoint
contractions such that $F(0) = \mathds{1}$. Let self-adjoint family $\{S(\tau)\}_{\tau > 0}$ be
defined by \emph{(\ref{eq:3.1.1}) and (\ref{eq:3.1.2})\,}, for self-adjoint operator $H \geq 0$ in $\gotH$.
Then the convergence
\be\la{eq:3.2.16}
\lim_{\gt\to +0}\sup_{t\in \cI}\|e^{-tS(\tau)} - e^{-tH}\| = 0 \ ,
\ee
holds for any bounded interval $\cI \subset \dR^{+}_{0}$ if and only if the condition
\be\la{eq:3.2.17}
\lim_{\gt\to +0}\sup_{t\in \cI}\|(\mathds{1} +t S(\tau))^{-1} - (\mathds{1} +tH)^{-1}\| = 0 \ ,
\ee
is valid for any bounded interval $\cI \subset \dR^{+}_{0}$.
\et
%
\begin{proof}
By conditions of theorem and by Lemma \ref{lem:3.2.1} we obtain from \eqref{eq:3.2.12} the estimate
\begin{equation*}
\sup_{t\in\cI}\|e^{-tS(\tau)} - e^{-tH}\|  \le c \ \sup_{t\in\cI}\|(\mathds{1} + t S(\tau))^{-1} -
(\mathds{1} + t H)^{-1}\| \, ,
\end{equation*}
for $\tau > 0$ and for any bounded interval $\cI \subset \dR^{+}_{0}$. This estimate and condition
\eqref{eq:3.2.17} imply the convergence in \eqref{eq:3.2.16}.

Conversely, assume \eqref{eq:3.2.16}. Note that by representation (\ref{eq:3.2.14-2}) one gets for
$t\geq 0 $:
\begin{equation*}
(\mathds{1} +tS(\tau))^{-1} - (\mathds{1} +tH)^{-1} =
\int^\infty_0 ds \ e^{-s}\big(e^{-s\,tS(\tau)} - e^{-s\,tH}\big)\, .
\end{equation*}
This yields the estimate
\begin{equation*}
\left\|(\mathds{1} +tS(\tau))^{-1} - (\mathds{1} +tH)^{-1}\right\| \le \int^\infty_0 ds \ e^{-s}
\big\|e^{-s\,tS(\tau)} - e^{-s\,tH}\big\|  ,
\end{equation*}
for $\tau > 0$ and $t \ge 0$.

Now, let $0< \varepsilon < 1$ and let $N_{\varepsilon} := - \ln(\varepsilon/2)$. Then
\begin{equation*}
\int^\infty_{N_{\varepsilon}}  ds \ e^{-s}\left\|e^{-s\,tS(\tau)} - e^{-s\,tH}\right\| \le
{\varepsilon}\, ,
\end{equation*}
for $\tau > 0$ and $t \ge 0$. Hence,
\begin{equation*}
\left\|(\mathds{1} +tS(\tau))^{-1} - (\mathds{1} +tH)^{-1}\right\| \le
\int^{N_{\varepsilon}}_0 ds \ e^{-s}\left\|e^{-stS(\tau)} - e^{-stH}\right\| + {\varepsilon} \ ,
\end{equation*}
that for any bounded interval $\cI \subset \dR^{+}_{0}$ and $\tau> 0$ yields
\begin{equation*}
\sup_{t\in\cI}\left\|(\mathds{1} +tS(\tau))^{-1} - (\mathds{1} +tH)^{-1}\right\| \le
\sup_{\ba{c} t\in\cI \wedge s\in [0,N_{\varepsilon}]\ea}\left\|e^{-s\,tS(\tau)} - e^{-s\,tH}\right\| +
{\varepsilon} \ .
\end{equation*}
Applying now \eqref{eq:3.2.16}
we obtain
\begin{equation*}
\lim_{\tau\to +0}\sup_{t\in\cI}\left\|(\mathds{1} +tS(\tau))^{-1} -
(\mathds{1} +tH)^{-1}\right\| \le {\varepsilon} \ ,
\end{equation*}
for any $\varepsilon > 0$. This completes the proof of \eqref{eq:3.2.17}.
%
\end{proof}

Now we are in position to improve Theorem \ref{th:3.2.3}. We relax the restriction to closed
intervals $\cI \subset \dR^{+}$ to condition on any bounded interval $\cI \subset \dR^{+}_{0}$.
%
\bt\la{th:3.2.6}
Let $F: \dR^{+}_{0} \longrightarrow \cL(\gotH)$ be a measurable family of non-negative
self-adjoint contractions such that $F(0) = \mathds{1}$. Let self-adjoint family
$\{S(\tau)\}_{\tau > 0}$ be defined by \emph{(\ref{eq:3.1.1})} and \emph{(\ref{eq:3.1.2})\,}
for self-adjoint operator $H \geq 0$ in $\gotH$. Then
\be\la{eq:3.2.18}
\lim_{n\to \infty}\sup_{t\in\cI}\|F(t/n)^n - e^{-tH}\| = 0  ,
\ee
for any bounded interval $\cI \subset \dR^{+}_{0}$ if and only if
\be\la{eq:3.2.19}
\lim_{n\to\infty}\sup_{t\in\cI}\|(\mathds{1} +t S(t/n))^{-1} - (\mathds{1} +tH)^{-1}\| = 0  ,
\ee
is satisfied for any bounded interval $\cI \subset \dR^{+}_{0}$.
\et
%
\begin{proof}
By \eqref{eq:3.2.13} and by assumption \eqref{eq:3.2.19} we obtain the limit \eqref{eq:3.2.18}.

Conversely, using \eqref{eq:3.2.14} and assumption \eqref{eq:3.2.18} one gets \eqref{eq:3.2.16}
for $\tau = t/n$ and for any bounded interval $\cI \subset \dR^{+}_{0}$. Then application of
Theorem \ref{th:3.2.5} yields \eqref{eq:3.2.19}.
%
\end{proof}

\section{{Chernoff product formula: rate of the operator-norm convergence}}\la{sec:3.3}
Theorem \ref{th:3.2.5} admits further improvements. They allow to establish estimates for the
\textit{rate} of operator-norm convergence in \eqref{eq:3.2.16} under certain conditions in
\eqref{eq:3.2.17}. Our next \textit{Note} concerns the estimates of the convergence rate in Theorem
\ref{th:3.2.6}. Recall that the first result in this direction was due to \cite{IT01} (Lemma 2.1).
\bl\la{lem:3.3.1}
Let $F: \dR^{+}_{0} \longrightarrow \cL(\gotH)$ be a measurable family of non-negative
self-adjoint contractions such that $F(0) = \mathds{1}$. Let self-adjoint family
$\{S(\tau)\}_{\tau > 0}$ be defined by \emph{(\ref{eq:3.1.1}) and (\ref{eq:3.1.2})}
for self-adjoint operator $H \geq 0$ in $\gotH$.
\item[\;\;\rm (i)]
If $\gr \in (0,1]$ and there is a constant $M_\gr > 0$ such that the estimate
\be\la{eq:3.3.1}
\|(\mathds{1} +tS(\gt))^{-1} - (\mathds{1} +tH)^{-1}\| \le
M_\gr\left(\frac{\gt}{t}\right)^\gr \, ,
\ee
holds for $\gt,t \in (0,1]$ and $0 < \gt \le t$, then there is a
constant $c_\gr > 0$ such that the estimate
\be\la{eq:3.3.2}
\|F(\gt)^{t/\gt} - e^{-tH}\| \le c_\gr\left(\frac{\gt}{t}\right)^\gr \, ,
\ee
is valid for $\gt,t \in (0,1]$ with $0 < \gt \le t$.

\item[\;\;\rm (ii)]
If $\gr \in (0,1)$ and there is a constant $c_\gr$ such that
\eqref{eq:3.3.2} holds, then there is a constant $M_\gr > 0$ such that
the estimate \eqref{eq:3.3.1} is valid.
\el
%
\begin{proof}
(i) By Lemma \ref{lem:3.2.1} there is a constant $c > 0$ such that
\be\la{eq:3.3.3}
\|e^{-tS(\gt)} - e^{-tH}\| \le c \, \|(\mathds{1} +tS(\gt))^{-1} - (\mathds{1} +tH)^{-1}\| \, ,
\ee
for $\gt,t > 0$. Then \eqref{eq:3.3.1}, for $\gt,t \in (0,1]$ with $0 < \gt \le t$, yields,
cf. Theorem \ref{th:3.2.5},
\bed
\big\|e^{-tS(\gt)} - e^{-tH}\big\| \le c \;M_\gr\left(\frac{\gt}{t}\right)^\gr \, .
\eed
By definition \eqref{eq:3.1.1} and  inequality \eqref{eq:3.1.151}
\be\la{eq:3.3.5}
\big\|F(\gt)^{t/\gt} - e^{-tS(\gt)}\big\| \le \frac{\gt}{t} \, .
\ee
Therefore, by estimate
\be\la{eq:3.3.6}
\big\|F(\gt)^{t/\gt} - e^{-tH}\big\| \le
\big\|F(\gt)^{t/\gt} - e^{-tS(\gt)}\big\| + \big\|e^{-tS(\gt)} - e^{-tH}\big\| \, ,
\ee
we obtain for $\gt,t \in (0,1]$ with $0 < \gt \le t$
\bed
\big\|F(\gt)^{t/\gt} - e^{-tH}\big\| \le
\frac{\gt}{t} + c \; M_\gr\left(\frac{\gt}{t}\right)^\gr \, .
\eed
Since for $\gr \in (0,1]$ one has ${\gt}/{t} \le \left({\gt}/{t}\right)^\gr$,
\be\la{eq:3.3.8}
\left\|F(\gt)^{t/\gt} - e^{-tH}\right\| \le
(1 + c \;M_\gr)\left(\frac{\gt}{t}\right)^\gr \ .
\ee
Setting $c_\gr := 1 + c\;M_\gr \,$, we prove \eqref{eq:3.3.2} for $\gr \in (0,1]$.

(ii) To prove (\ref{eq:3.3.1}) we use the identity:
\begin{equation*}
(\mathds{1} +tS(\gt))^{-1} - (\mathds{1} +tH)^{-1} =
\sum^\infty_{n=0}\;\int^{n+1}_n \; dx \;e^{-x}
\big(e^{-xtS(\gt)} - e^{-xtH}\big)\, ,
\end{equation*}
here $\gt,t > 0$. Substitution $x = y + n$ yields
\bead
\lefteqn{
(\mathds{1} +tS(\gt))^{-1} - (\mathds{1} +tH)^{-1} =}\\
& & \hspace{1.0cm}
\sum^\infty_{n=0}\;e^{-n}\;\int^{1}_0 \; dy \;e^{-y}
\big(e^{-(y+n)tS(\gt)} - e^{-(y+n)tH}\big)\, ,
\eead
%
%
%
%
%
%
%
%
%
%
that gives the representation
\bead
\lefteqn{
(\mathds{1} +tS(\gt))^{-1} - (\mathds{1} +tH)^{-1} =}\\
& & \hspace{0.5cm}
\sum^\infty_{n=0}\;e^{-n}\left \{
\Big(\sum^{n-1}_{k=0} e^{-ktS(\gt)}\big(e^{-tS(\gt)} - e^{-tH}\big) e^{-(n-k-1)tH}\Big)
\int^{1}_0 \; dy \;e^{-y}\;e^{-ytS(\gt)} + \right.\\
& & \hspace{0.5cm}
\left.
e^{-ntH}\int^{1}_0 \; dy\;e^{-y}\;\big(e^{-ytS(\gt)} - e^{-ytH}\big)
\right\}.
\eead
Hence, we obtain the estimate
\bea\la{eq:3.3.9}
\lefteqn{
\|(\mathds{1} +tS(\gt))^{-1} - (\mathds{1} +tH)^{-1}\| \le}\\
& & \hspace{0.5cm}
\sum^\infty_{n=0}\;e^{-n}\Big\{n \ \big\|e^{-tS(\gt)} - e^{-tH}\big\| +
\int^{1}_0 \; dy \;e^{-y}\;\big\|e^{-ytS(\gt)} - e^{-ytH}\big\|\Big\}\, .
\nonumber
\eea
%
%

Note that assumption (\ref{eq:3.3.2}) and estimate \eqref{eq:3.3.5} yield
for $\gt,t \in (0,1]$, with $0 < \gt \le t$,
\be\la{eq:3.3.10}
\big\|e^{-tS(\gt)} - e^{-tH}\big\| \le {{(1 + c_\gr)}} \left(\frac{\gt}{t}\right)^\gr .
\ee
To treat the last term in \eqref{eq:3.3.9}
we use decomposition
\bea\la{eq:3.3.11}
\lefteqn{
\int^{1}_0 \; dy \;e^{-y}\;\big\|e^{-ytS(\gt)} - e^{-ytH}\big\| = }\\
& & \hspace{0.5cm}
\int^1_{\gt/t} \; dy \;e^{-y}\;\big\|e^{-ytS(\gt)} - e^{-ytH}\big\| +
\int^{\gt/t}_0 \; dy \;e^{-y}\;\big\|e^{-ytS(\gt)} - e^{-ytH}\big\|.
\nonumber
\eea
%
%
Since by (\ref{eq:3.3.10}) for $\gt,t,y \in (0,1]$ and $\gt/t \le y$
\bed
\big\|e^{-ytS(\gt)} - e^{-ytH}\big\| \le (1 + c_\gr) \left(\frac{\gt}{ty}\right)^\gr\, ,
\eed
we obtain for $0 < \gt \le t$ the estimate
\be\la{eq:3.3.12}
\int^{1}_{\gt/t} \; dy \;e^{-y}\;\big\|e^{-ytS(\gt)} - e^{-ytH}\big\| \le
(1 + c_\gr) \int^{1}_0 \; dy \; e^{-y}y^{-\gr}\left(\frac{\gt}{t}\right)^\gr .
\ee
Moreover, for $\gr <1 $ one obviously gets
\be\la{eq:3.3.13}
\int^{\gt/t}_0 \; dy \;e^{-y}\;\big\|e^{-ytS(\gt)} - e^{-ytH}\big\|
\le \; 2\left(\frac{\gt}{t}\right)^\gr\, .
\ee
Taking into account (\ref{eq:3.3.12}) and
(\ref{eq:3.3.13}) we obtain from (\ref{eq:3.3.11}):
\be\la{eq:3.3.14}
\int^{1}_0 \; dy \;e^{-y}\;\big\|e^{-ytS(\gt)} - e^{-ytH}\big\| \le
\Big[(1 + c_\gr) \int^{1}_0 \; dy \; e^{-y}y^{-\gr} + 2\Big]
\left(\frac{\gt}{t}\right)^\gr ,
\ee
for $\gt,t \in (0,1]$, with $0 < \gt \le t$.

Finally, by virtue of \eqref{eq:3.3.10} and \eqref{eq:3.3.14} we get for \eqref{eq:3.3.9}
%
%
\bead
\lefteqn{
\|(\mathds{1} +tS(\gt))^{-1} - (\mathds{1} +tH)^{-1}\| \le}\\
& & \hspace{1.0cm}
\sum^\infty_{n=0}\;e^{-n}\Big\{
n\;{{(1 + c_\gr)}} + (1 + c_\gr) \int^{1}_0 \; dy \; e^{-y}y^{-\gr} + 2\Big\}
\left(\frac{\gt}{t}\right)^\gr .
\eead
Now, setting
\begin{equation*}
M_\gr := \sum^\infty_{n=0}\;e^{-n}\Big\{
n\;{{(1 + c_\gr)}} + (1 + c_\gr) \int^{1}_0 \; dy \; e^{-y}y^{-\gr} + 2\Big\}
\end{equation*}
we obtain estimate \eqref{eq:3.3.1}.
%
%
\end{proof}
\begin{remark}\label{rem:3.3.2}
In Lemma \ref{lem:3.3.1}(i) it is shown that for $\gr =1$ the condition \eqref{eq:3.3.1}
implies \eqref{eq:3.3.2}. But it is \textit{unclear} for converse since Lemma \ref{lem:3.3.1}(ii)
does not cover this case.
%
%
%
\end{remark}
The next assertion extends the result of Lemma \ref{lem:3.3.1}(i) to \textit{any} bounded interval
$\cI \subset \dR^{+}_{0}$.
\bt\la{th:3.3.2}
Let $F: \dR^{+}_{0} \longrightarrow \cL(\gotH)$ be a measurable family of non-negative
self-adjoint contractions such that $F(0) = \mathds{1}$. Let self-adjoint family
$\{S(\tau)\}_{\tau > 0}$ be defined by \emph{(\ref{eq:3.1.1}) and (\ref{eq:3.1.2})}
for self-adjoint operator $H \geq 0$ in $\gotH$.

If for some $\gr \in (0,1]$ there is a constant $M_\gr > 0$ such that the estimate \eqref{eq:3.3.1}
holds for $\gt,t \in (0,1]$ and $0 < \gt \le t$, then for any bounded interval $\cI \subset \dR^{+}_{0}$
there is a constant $c^\cI_\gr > 0$ such that the estimate
\be\la{eq:3.3.15}
\sup_{t\in\cI}\|F(t/n)^n - e^{-tH}\| \le c^\cI_\gr \; \frac{1}{n^\gr}\, ,
\ee
holds for $n \geq 1$.
\et
%
\begin{proof}
Let $N \in \dN$ such that $\cI \subseteq [0,N]$. Then representation
\bed
\begin{split}
F(t/Nn)^{Nn}& - e^{-t\, H}\\
& = \sum^{N-1}_{k=0}e^{-k \ t \, H/N}(F(t/Nn)^n - e^{-t\, H/N})F(t/Nn)^{(N-1-k)n} \, ,
\end{split}
\eed
for $n \geq 1$, yields the estimate
\be\la{eq:3.3.151}
\|F(t/Nn)^{Nn} - e^{-t\, H}\| \le N\|F(t/Nn)^n - e^{-t\, H/N}\| .
\ee
Let $t' := t/N \leq 1$ and $\gt' :=  t'/n \leq 1$. Then $t \leq N$ and $0 < \gt' \le t' \le 1$.
By Lemma \ref{lem:3.3.1}(i) we obtain
\bed
\|F(t/Nn)^{n} - e^{-tH/N}\| = \|F(\gt')^{t'/\gt'} - e^{-t'H}\|\le
c_\gr \left(\frac{\tau'}{t'}\right)^\gr ,
\eed
and by (\ref{eq:3.3.151}) the estimate
\bed
\|F(t/Nn)^{Nn} - e^{-tH}\| \le c_\gr N \left(\frac{\tau'}{t'}\right)^\gr.
\eed
Let $n' := Nn \ge 1$. Then
\bed
\|F(t/n')^{n'} - e^{-tH}\| \le c_\gr N^{1+\gr} \left(\frac{1}{n'}\right)^\gr, \quad t \in [0,N].
\eed
Setting $c^{[0,N]}_{\gr} := c_\gr N^{1+\gr}$, we prove the theorem for $\cI = [0,N]$. Since for
any bounded interval $\cI$ one can always find a $N \in \dN$ such that $\cI \subseteq [0,N]$, this
completes the proof.   
\end{proof}
To extend Theorem \ref{th:3.3.2} to $\cI  = \dR^{+}_{0}$ one needs conditions when the values of
$\gt , t$ are allowed to be unbounded.
\bt\la{th:3.3.3}
Let $F: \dR^{+}_{0} \longrightarrow \cL(\gotH)$ be a measurable family of non-negative
self-adjoint contractions such that $F(0) = \mathds{1}$. Let self-adjoint family
$\{S(\tau)\}_{\tau > 0}$ be defined by \emph{(\ref{eq:3.1.1}) and (\ref{eq:3.1.2})}
for self-adjoint operator $H \geq 0$ in $\gotH$.

If for some $\gr \in (0,1]$ there is a constant $M_\gr > 0$ such that the estimate \eqref{eq:3.3.1}
holds for $0 < \gt \le t < \infty$, then there exists a constant $c^{\dR^+}_\gr > 0$ such that
for $\tau =t/n$ the estimate
\be\la{eq:3.3.16}
\sup_{t\in \dR^{+}_{0}}\|F(t/n)^n - e^{-tH}\| \le c^{\dR^+}_\gr \frac{1}{n^\gr} ,
\ee
holds for $n \ge 1$.
\et
%
\begin{proof}
The arguments ensuring that \eqref{eq:3.3.3} yields estimate \eqref{eq:3.3.8} go through verbatim
if we assume \eqref{eq:3.3.1} for $0 < \gt \le t < \infty$. Then setting $\gt := t/n$, $n \in \dN$ we
deduce from \eqref{eq:3.3.8}
\bed
\|F(t/n)^n - e^{-tH}\| \le c^{\dR^+}_\gr \, \frac{1}{n^\gr}\, , \quad n \ge 1,
\eed
for $t \in \dR^{+}_{0}$, where $c^{\dR^+}_\gr := 1 + c \, M_\gr \, $. So, this proves (\ref{eq:3.3.16}).
\end{proof}
\begin{remark}\label{rem:3.3.21}
We \textit{Note} that in Theorem \ref{th:3.2.6} we established the self-adjoint operator-norm convergent
\textit{extension} of the Chernoff product formula for any bounded interval $\cI \subset \dR^{+}_{0}$
under $t$-\textit{dependent} condition (\ref{eq:3.2.19}), which is necessary and sufficient.\\
We \textit{Note} that Theorem \ref{th:3.3.2} and Theorem \ref{th:3.3.3} prove self-adjoint operator-norm
Chernoff product formula in $\dR^{+}_{0}$ with \textit{estimate} of the \textit{rate} of convergence.
They are also based on $t$-\textit{dependent} {fractional power} condition (\ref{eq:3.3.1}),
which is necessary and sufficient for $\rho \in (0,1)$.
\end{remark}
Our next \textit{Note} is that $t$-dependence in assumption (\ref{eq:3.3.1}) for $\gr =1$  can be
relaxed. The assertion below extends to $\dR^{+}_{0}$ (with convergence rate) the established in
Theorem \ref{th:3.2.3} operator-norm convergence of the self-adjoint Chernoff product formula for
bounded interval $\cI \subset \dR^{+}$. It yields an operator-norm version of original Chernoff
product formula, see Proposition \ref{prop:3.1.0}.
\bt\la{th:3.3.4}
Let $F: \dR^{+}_{0} \longrightarrow \cL(\gotH)$ be a measurable family of non-negative
self-adjoint contractions such that $F(0) = \mathds{1}$. Let self-adjoint family
$\{S(\tau)\}_{\tau > 0}$ be defined by \emph{(\ref{eq:3.1.1}) and (\ref{eq:3.1.2})}
for self-adjoint operator $H \geq 0$ in $\gotH$.

If there is a constant $M_1 > 0$ such that the estimate
\be\la{eq:3.3.17}
\|(\mathds{1} +S(\gt))^{-1} - (\mathds{1} +H)^{-1}\| \le M_1\gt \, ,
\ee
holds for $\gt \in (0,1]$, then for any bounded interval $\cI \subset \dR^{+}_{0}$ there is a
constant $c^\cI_1 > 0$ such that the estimate
\be\la{eq:3.3.18}
\sup_{t\in\cI}\|F(t/n)^n - e^{-tH}\| \le c^\cI_1 \, \frac{1}{n}\, ,
\ee
holds for $n \ge 1$.
\et
%
\begin{proof}
By virtue of (\ref{eq:3.2.71}) and \eqref{eq:3.3.17} we obtain the estimate
\be\la{eq:3.3.19}
\begin{split}
\|&(\mathds{1} +tS(\gt))^{-1} - (\mathds{1} +tH)^{-1}\|\\
&\le M_1 \gt \;t \ \|(\mathds{1} +S(\gt))(\mathds{1} +tS(\gt))^{-1}\|\|(\mathds{1}+ H)
(\mathds{1} +tH)^{-1}\|\,.
\end{split}
\ee

Then \eqref{eq:3.3.19}, together with estimates
$\|(\mathds{1} +S(\gt))(\mathds{1} +tS(\gt))^{-1}\| \le {1}/{t}$
and $\|(\mathds{1}+ H)((\mathds{1} +tH)^{-1}\| \le {1}/{t}$, for self-adjoint $S(\gt)$ and $H$,
imply for $0 < \gt \le t \le 1$ and $\gr = 1$ the estimate \eqref{eq:3.3.1} and thus \eqref{eq:3.3.2}.
Finally, applying Theorem \ref{th:3.3.2} we extend the proof to any bounded interval
$\cI \subset \dR^{+}_{0}$.
%
%
\end{proof}
Now we extend Theorem \ref{th:3.3.4} for condition (\ref{eq:3.3.17}), to infinite interval
$\cI = \dR^{+}_{0}$. To this end, similar to Theorem \ref{th:3.3.3}, one needs additional assumption
valid on infinite $t$-intervals.
\bt\la{th:3.3.5}
Let in addition to conditions of Theorem \emph{\ref{th:3.3.4}} operator
$H \ge \mu \mathds{1}$, $\mu > 0$ and for any $\varepsilon > 0$ there exists a $\gd_\varepsilon \in (0,1)$
such that
\be\la{eq:3.3.20}
0 \le F(\gt) \le (1-\gd_\varepsilon)\mathds{1},
\ee
is valid for $\gt \ge \varepsilon$. If there is a constant $M_1 > 0$ such that \eqref{eq:3.3.17}
holds for $\gt \in (0,\varepsilon \leq 1)$, then there exists a constant $c^{\dR^+}_1 > 0$
such that \eqref{eq:3.3.18} is valid for infinite interval $\cI = \dR^{+}_{0}$.
\et
%
\begin{proof}
Since \eqref{eq:3.3.17} implies the resolvent-norm convergence of $\{S(\tau)\}_{\tau > 0}$,
when $\tau \rightarrow  +0$, and since $H \ge \mu \mathds{1}$, there exists
$0 < \mu_{\varepsilon} \le \mu $ such that $S(\gt) \ge \mu_{\varepsilon} \mathds{1}$ for
$\gt \in (0,\varepsilon)$, where $\varepsilon$ is sufficiently \textit{small}.

On the other hand, \eqref{eq:3.3.19} for self-adjoint $S(\gt)$ and $H$, yields
\be\la{eq:3.3.21-1}
\begin{split}
\|&(\mathds{1} +tS(\gt))^{-1} - (\mathds{1} +tH)^{-1}\|= \\
&\le M_1 \, \frac{\gt}{t} \, \|(\mathds{1} +S(\gt))(\mathds{1}/t +
S(\gt))^{-1}\| \|(\mathds{1}+ H)(\mathds{1}/t +H)^{-1}\| \, ,
\end{split}
\ee
for $t > 0$. Since $S(\gt) \ge \mu_{\varepsilon} \mathds{1}$, $\gt \in (0,\varepsilon)$, and
$H \ge \mu \mathds{1}$, we obtain estimates
\bed
\begin{split}
&\|(\mathds{1} +S(\gt))(\mathds{1}/t +S(\gt))^{-1}\| \le \frac{1+\mu_{\varepsilon}}{\mu_{\varepsilon}}\, ,\\
&\|(\mathds{1}+ H)(\mathds{1}/t +H)^{-1}\| \le \frac{1+\mu}{\mu} \, .
\end{split}
\eed
By (\ref{eq:3.3.21-1}) these estimates give
\be\la{eq:3.3.22}
\|(\mathds{1} +tS(\gt))^{-1} - (\mathds{1} +tH)^{-1}\| \le M^{\dR^+}_1 \frac{\gt}{t}\, ,
\ee
for $\gt \in (0,\varepsilon)$ and $t > 0$. Here
$M^{\dR^+}_1 := M_1 \, (1+\mu_{\varepsilon})(1+\mu)/{\mu_{\varepsilon}}{\mu}$.

Note that if $\tau/t \, \leq 1$, then by (\ref{eq:3.3.5})
\bed
\big\|F(\gt)^{t/\gt} - e^{-tS(\gt)}\big\| \le \frac{\gt}{t} \, , \quad  0 < \gt \le t < \infty \, .
\eed
Therefore, \eqref{eq:3.3.3}, \eqref{eq:3.3.6} and
(\ref{eq:3.3.22}), which are valid for $\gt \in (0,\varepsilon \leq 1)$, allow to use the result
(\ref{eq:3.3.18}) of Theorem \ref{th:3.3.4} for the case $0 < \gt \le t < \infty$, and
$\tau = t/n$, $n \ge 1$. This yields
\be\la{eq:3.3.23}
\| F(t/n)^n - e^{-tH}\| \le \widehat{c}^{\ \dR^+}_1 \frac{1}{n} \, ,
\ee
for bounded interval: $t \in [0,\varepsilon n )$, and for
$\widehat{c}^{\ \dR^+}_1 := 1 + c \, M^{\dR^+}_1$.

Now let $t \geq \varepsilon n$. Then by assumption \eqref{eq:3.3.20} we have
\be\la{eq:3.3.24}
\|F(t/n)^n\| \le (1-\gd_\varepsilon)^{n} = e^{n\, \ln(1-\gd_\varepsilon)},
\quad t \geq n\varepsilon \,.
\ee
Note that $H \ge \mu I$ implies $\|e^{-tH}\| \le e^{-n\varepsilon\mu}$ for
$t \geq n\varepsilon$. This together with (\ref{eq:3.3.23}) and (\ref{eq:3.3.24}) yield
for a small $\varepsilon > 0$, cf. (\ref{eq:3.3.22}), the estimate
\bed
\| F(t/n)^n - e^{-tH}\| \le \widehat{c}^{\ \dR^+}_1 \frac{1}{n} +
e^{n\, \ln(1-\gd_\varepsilon)} + e^{-n\varepsilon\mu} \, ,
\eed
valid for \textit{any} $t \geq 0$.

Since $\widetilde{c}_1 := \sup_{n \ge 1} n(e^{n\, \ln(1-\gd_\varepsilon)} +
e^{-n\varepsilon\mu}) < \infty$, there exists constant $c^{\dR^+}_1 :=
\widehat{c}^{\ \dR^+}_1 + \widetilde{c}_1$ such that \eqref{eq:3.3.18} is valid for $n \ge 1$
and infinite interval $\cI = \dR^{+}_{0}$
%
%
\end{proof}
\section{{Nonself-adjoint operator-norm Chernoff product formula}}\la{sec:3.4}
The results on the \textit{nonself-adjoint} Chernoff product formula in operator-norm topology
are more restricted. The most of them concern the \textit{quasi-sectorial} contractions \cite{CZ01}.
\begin{definition}\label{def:2.1.2}
A {contraction} $F$ on the Hilbert space $\mathfrak{H}$ is called
\textit{quasi-sectorial} with  semi-angle $\alpha\in [0, \pi/2)$
with respect to the vertex at $z=1$, if its numerical range $W(F)\subseteq D_{\alpha}$, where
closed domain
\begin{equation}\label{eq:2.1.1}
 D_{\alpha}:=\{z\in {\mathbb{C}}: |z|\leq \sin \alpha\} \cup
\{z\in {\mathbb{C}}: |\arg (1-z)|\leq \alpha \ {\rm{and}}\ |z-1|\leq
\cos \alpha \} .
\end{equation}
The limits: $\alpha=0$ and $\alpha = \pi/2-0$, correspond, respectively, to non-negative self-adjoint
contractions and to general contractions.
\end{definition}
A characterisation of {quasi-sectorial} contraction \textit{semigroups} is due to \cite{Zag08},
\cite{ArZ10}.
\begin{proposition}\label{prop:6.2.2}
$C_0$-semigroup $\{e^{- t \, H}\}_{t \geq 0}$ is, for $t>0$, a family of quasi-sectorial contractions
with $W(e^{-t \, H}) \subseteq D_{\alpha} \, $, if and only if generator $H$ is an $m$-sectorial
operator with $W(H) \subset S_{\alpha}$, the open sector with semi-angle $\alpha\in[0, \pi/2)$ and
vertex at $z=0 \,$.
\end{proposition}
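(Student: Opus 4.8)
The plan is to prove both implications of Proposition~\ref{prop:6.2.2} by relating the numerical range of the semigroup to that of its generator, using the standard theory of $m$-sectorial operators and the relation between the resolvent of $H$ and the Laplace transform of $\{e^{-tH}\}_{t\geq0}$.

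\medskip

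\textbf{($\Leftarrow$) From $m$-sectoriality of $H$ to quasi-sectoriality of $e^{-tH}$.}
Assume $H$ is $m$-sectorial with $W(H)\subseteq \overline{S_\alpha}$, $\alpha\in[0,\pi/2)$. First I would recall that such an $H$ generates a holomorphic contraction $C_0$-semigroup, and moreover that $e^{-tH}$ maps into the numerical range closure determined by the functional calculus: writing $e^{-tH}=\frac{1}{2\pi i}\int_{\partial S_{\alpha'}} e^{-tz}(z-H)^{-1}\,dz$ for $\alpha<\alpha'<\pi/2$, one estimates $\langle e^{-tH}u,u\rangle$ for a unit vector $u$. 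The cleanest route is the spectral-mapping/functional-calculus fact that for $m$-sectorial $H$ and any function $\phi$ holomorphic on a sector containing $\overline{S_\alpha}$ and bounded there, $W(\phi(H))\subseteq \overline{\mathrm{conv}\,\phi(\overline{S_\alpha})}$; applied to $\phi(z)=e^{-tz}$ this gives $W(e^{-tH})\subseteq \overline{\mathrm{conv}}\,\{e^{-tz}:|\arg z|\leq\alpha\}$, and an elementary geometric computation identifies the latter closed convex hull as being contained in $D_\alpha$ of \eqref{eq:2.1.1}. (One checks: the boundary ray $z=re^{\pm i\alpha}$, $r\geq0$, is mapped to a curve from $1$ spiralling into $0$ that stays in the region $\{|\arg(1-w)|\leq\alpha,\ |w-1|\leq\cos\alpha\}\cup\{|w|\leq\sin\alpha\}$.) The case $\alpha=0$ reduces to the familiar $W(e^{-tH})\subseteq[0,1]$ for non-negative self-adjoint $H$.

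\medskip

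\textbf{($\Rightarrow$) From quasi-sectoriality of $e^{-tH}$ to $m$-sectoriality of $H$.}
Conversely, assume $W(e^{-tH})\subseteq D_\alpha$ for all $t>0$. Since $\{e^{-tH}\}$ is a contraction $C_0$-semigroup, $H$ is $m$-accretive, so it suffices to control $W(H)$. For a unit vector $u\in\dom(H)$ write
\[
\langle Hu,u\rangle=\lim_{t\to+0}\frac{\langle (\mathds{1}-e^{-tH})u,u\rangle}{t}
=\lim_{t\to+0}\frac{1-\langle e^{-tH}u,u\rangle}{t}.
\]
Put $w_t:=\langle e^{-tH}u,u\rangle\in D_\alpha$. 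From the geometry of $D_\alpha$ near the vertex $z=1$ (the part $\{|\arg(1-z)|\leq\alpha\}$), one has $|\arg(1-w_t)|\leq\alpha$ whenever $|1-w_t|\leq\cos\alpha$, which holds for $t$ small by strong continuity; hence $\arg\big((1-w_t)/t\big)=\arg(1-w_t)\in[-\alpha,\alpha]$, and passing to the limit gives $\arg\langle Hu,u\rangle\in[-\alpha,\alpha]$, i.e. $W(H)\subseteq\overline{S_\alpha}$. Since $H$ is $m$-accretive and $W(H)$ lies in a proper closed sector, $H$ is $m$-sectorial; openness of the sector $S_\alpha$ in the statement is handled by the usual convention (the numerical range lies in the closed sector, and the vertex/real axis behaviour is controlled by accretivity). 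This completes the equivalence.

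\medskip

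\textbf{Main obstacle.} The delicate point is the geometric lemma in the $(\Leftarrow)$ direction: verifying that $\overline{\mathrm{conv}}\,\{e^{-tz}:|\arg z|\leq\alpha,\ \RE z\geq0\}\subseteq D_\alpha$ uniformly in $t>0$, in particular that the ``cap'' condition $|z-1|\leq\cos\alpha$ together with the angular condition $|\arg(1-z)|\leq\alpha$ exactly captures the image, with no part escaping into $\{|z|>\sin\alpha\}$ outside the cap. This is a plane-geometry computation with the exponential map restricted to a sector; I would handle it by parametrising the boundary rays, checking the extreme curves, and invoking convexity of $D_\alpha$ (which itself requires a short check). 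Everything else is standard $m$-sectorial operator theory (Kato, Ch.~VI) plus the functional-calculus bound on numerical ranges, for which I would cite \cite{Zag08},\cite{ArZ10} and skip the routine details.
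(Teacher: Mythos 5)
The paper itself offers no proof of Proposition~\ref{prop:6.2.2}: it is quoted as a known characterisation from \cite{Zag08} and \cite{ArZ10}, so your attempt has to be judged on its own merits. Your ($\Rightarrow$) direction is essentially sound: $w_t=\langle e^{-tH}u,u\rangle\to 1$ forces $w_t$, for small $t$, into the sector part of $D_\alpha$ rather than the disc $\{|z|\le\sin\alpha\}$, whence $\arg(1-w_t)\in[-\alpha,\alpha]$ and the derivative at $t=+0$ places $\langle Hu,u\rangle$ in the closed sector; together with $m$-accretivity of the generator of a contraction semigroup this gives $m$-sectoriality (the closed-versus-open sector convention is harmless).

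The ($\Leftarrow$) direction, however, contains a genuine gap. You base it on the ``functional-calculus fact'' $W(\phi(H))\subseteq\overline{\mathrm{conv}}\,\phi(W(H))$ for holomorphic $\phi$. No such mapping theorem holds for non-normal operators. For instance, with $A=\begin{pmatrix}0&1\\0&0\end{pmatrix}$ one has $W(A)=\{|z|\le 1/2\}$, and for $\phi(z)=(1+z)^2$ one gets $\phi(A)=\mathds{1}+2A$, so that $W(\phi(A))=\{|z-1|\le 1\}\ni 0$, while $\RE\phi(z)=(1+x)^2-y^2\ge 1/4$ on $|z|\le 1/2$, so $\overline{\mathrm{conv}}\,\phi(W(A))\subseteq\{w:\RE w\ge 1/4\}$. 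The mapping theorems that do exist for numerical ranges (Kato's theorem for M\"obius transformations, the Berger--Stampfli theorem for holomorphic maps of the disc fixing the origin, the power inequality for the numerical radius) do not cover $\phi(z)=e^{-tz}$ on a sector. Deducing $W(e^{-tH})\subseteq D_\alpha$ from $W(H)\subseteq S_\alpha$ is precisely the nontrivial operator-theoretic content of \cite{Zag08} and \cite{ArZ10}, which have to work through the numerical ranges of resolvent-type approximants and such genuine mapping theorems; it is not a corollary of a standard numerical-range bound followed by plane geometry. So the real obstacle is not the convex-hull computation you flag at the end, but the step you treat as citable and routine; as written, the forward implication is not proved.
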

Note that if operator $F$ is a quasi-sectorial contraction and $W(F)\subseteq D_{\alpha}$, then
$\mathds{1}- F$ is also $m$-sectorial operator with vertex $z=0$ and semi-angle $\alpha$. Using the
Riesz-Dunford functional calculus one obtains estimate
%
%
\begin{equation}\label{eq:2.1.14}
\|F^n (\mathds{1}-F)\|\leq \frac{K}{n+1} \ , \ n\in{\mathbb{N}} \ .
\end{equation}
The estimate (\ref{eq:2.1.14}) allows to go beyond the Chernoff $\sqrt{n}$-Lemma (\ref{eq:0.5}) and
to establish the $(1/\sqrt[3]{n})$-Theorem  \cite{Zag17}.
\begin{proposition}\label{th:6.2.2}
Let $F$ be a quasi-sectorial contraction on $\mathfrak{H}$ with
numerical range $W(F)\subseteq D_\alpha$ for $\alpha\in [0, \pi/2)$. Then
\begin{equation}\label{eq:6.2.5}
\left\|F^n - e^{n(F-\mathds{1})}\right\| \leq {M\over n^{1/3}} \ , \ \ n\in{\mathbb{N}}\, ,
\end{equation}
where $M=2K+2$ and $K$ is defined by {\rm{(\ref{eq:2.1.14})}}.
\end{proposition}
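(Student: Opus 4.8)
The plan is to expand $e^{n(F-\mathds{1})}$ in its Poisson series and compare it with $F^n$ term by term, using \eqref{eq:2.1.14} to control how far $F^j$ is from $F^n$ when $j$ is close to $n$. Since $\mathds{1}$ and $F$ commute,
\[
e^{n(F-\mathds{1})} = e^{-n}\,e^{nF} = \sum_{j=0}^{\infty} p_j\, F^j,\qquad p_j:=e^{-n}\,\frac{n^j}{j!},\qquad \sum_{j\ge 0}p_j=1,
\]
so that $F^n - e^{n(F-\mathds{1})} = \sum_{j\ge 0} p_j\,(F^n-F^j)$ and hence, by the triangle inequality and $\|F\|\le 1$,
\[
\big\|F^n - e^{n(F-\mathds{1})}\big\| \le \sum_{j\ge 0} p_j\,\|F^n-F^j\|.
\]

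Next I would estimate $\|F^n-F^j\|$ by telescoping and invoking \eqref{eq:2.1.14}. For $j\ge n$ one has $F^j-F^n = F^n(F^{j-n}-\mathds{1}) = -\sum_{i=0}^{j-n-1}F^{n+i}(\mathds{1}-F)$, and for $j<n$, $F^j-F^n = \sum_{i=0}^{n-j-1}F^{j+i}(\mathds{1}-F)$; in both cases \eqref{eq:2.1.14} gives
\[
\|F^n-F^j\| \le K\,|n-j|\,\big(\min(j,n)+1\big)^{-1},
\]
while trivially $\|F^n-F^j\|\le 2$. Thus, for a window radius $r>0$ still to be fixed, every index $j$ with $|n-j|\le r$ satisfies $\|F^n-F^j\|\le Kr/(n-r+1)$ (using $\min(j,n)\ge n-r$ inside the window), whereas outside the window we retain the crude bound $2$.

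The decisive step is to balance the two error contributions. Splitting the sum at radius $r$ and using the second-moment identity $\sum_{j\ge0}p_j(j-n)^2 = n$ (the variance of $\mathrm{Poisson}(n)$),
\[
\big\|F^n - e^{n(F-\mathds{1})}\big\|
\le \frac{Kr}{n-r+1}\sum_{|n-j|\le r}p_j \;+\; 2\sum_{|n-j|>r}p_j
\le \frac{Kr}{n-r+1} \;+\; \frac{2n}{r^2}.
\]
Choosing $r=n^{2/3}$, the elementary inequality $2n^{2/3}\le n+2$ (valid for all $n\ge 1$) yields $Kr/(n-r+1)\le 2K\,n^{-1/3}$, while $2n/r^2 = 2\,n^{-1/3}$; adding these gives $\|F^n - e^{n(F-\mathds{1})}\|\le (2K+2)\,n^{-1/3}$, which is \eqref{eq:6.2.5} with $M=2K+2$.

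I expect the only genuine subtlety to be the choice of window radius: the telescoped error accumulated over the window grows linearly in $r$ (like $Kr/n$), whereas the Poisson mass outside the window decays only like $n/r^2$ under the crude second-moment estimate, so the cube-root rate — and the clean constant $2K+2$ — are precisely what the balanced choice $r\sim n^{2/3}$ produces; a sharper treatment of the tail would in fact improve the exponent, but is not needed here. A minor point to verify along the way is that inside the window $\min(j,n)+1\ge n-r+1$ stays of order $n$, so that the factor $(\min(j,n)+1)^{-1}$ coming from \eqref{eq:2.1.14} is not wasted near $j\approx n-r$.
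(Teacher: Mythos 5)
Your argument is correct: the Poisson expansion $e^{n(F-\mathds{1})}=\sum_j p_jF^j$, the telescoping bound $\|F^n-F^j\|\le K|n-j|/(\min(j,n)+1)$ via \eqref{eq:2.1.14}, the Chebyshev tail estimate $\sum_{|j-n|>r}p_j\le n/r^2$, and the balance $r=n^{2/3}$ all check out, including the constant $M=2K+2$. The paper states this proposition without proof, deferring to \cite{Zag17}, and your windowed Poisson--Chebyshev argument is essentially the proof given there, so there is nothing to add.
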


Next we recall nonself-adjoint operator-norm extension of the Trotter-Neveu-Kato
convergence theorem for \textit{quasi-sectorial} contraction semigroups \cite{CZ01} (Lemma 4.1):
\begin{proposition}\label{th:6.2.21}
Let $\{S(\tau)\}_{\tau >0}$ be a family of $m$-sectorial operators with $W(S(\tau))\subseteq S_\alpha$
for some $\alpha \in [0, \pi/2)$ and for all $\tau>0$. Let $H$ be an {$m$-sectorial} operator with
$W(H) \subset S_{\alpha}$.
Then the following conditions are equivalent:
\begin{eqnarray*}
(a) & & \lim_{\tau\rightarrow +0} \left\|(\zeta \mathds{1} + S(\tau))^{-1} -
(\zeta \mathds{1} + H)^{-1}\right\| = 0, \ \
\mbox{ for some } \zeta\in S_{\pi-\alpha};\\
(b) & & \lim_{\tau\rightarrow +0} \left\|e^{-t\, S(\tau)} - e^{-t\, H} \right\| = 0, \ \
\mbox{ for t in a subset of \ } \dR^{+} \mbox{ having a limit point}.
\end{eqnarray*}
\end{proposition}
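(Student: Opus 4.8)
The plan is to prove the two implications separately; in both directions the decisive point is that $W(S(\tau))\subseteq S_\alpha$ holds with \emph{one and the same} half-angle $\alpha$ for all $\tau$, so that every estimate produced by the numerical range is automatically uniform in $\tau$.

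\textbf{(a)$\Rightarrow$(b).} Fix $\alpha'\in(\alpha,\pi/2)$ and a standard Dunford--Taylor contour $\Lambda\subset S_{\pi-\alpha}$ which encircles $-\overline{S_\alpha}$ counterclockwise, runs along the rays $\arg\lambda=\pm(\pi-\alpha')$ outside a small disc and makes a small detour around the origin; on $\Lambda$ the function $|e^{t\lambda}|$ is bounded on the compact part and decays like $e^{-c\,t|\lambda|}$, $c>0$, at infinity. Since $W(S(\tau))\subseteq\overline{S_\alpha}$ and $W(H)\subseteq\overline{S_\alpha}$, the numerical-range bound gives $\|(\lambda\mathds 1+S(\tau))^{-1}\|\le C_0/(1+|\lambda|)$ and $\|(\lambda\mathds 1+H)^{-1}\|\le C_0/(1+|\lambda|)$ for $\lambda\in\Lambda$, with $C_0$ independent of $\tau$, and hence the representations $e^{-tS(\tau)}=\frac1{2\pi i}\int_\Lambda e^{t\lambda}(\lambda\mathds 1+S(\tau))^{-1}\,d\lambda$ (and the analogous one for $H$) hold for every $t>0$. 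Writing $R_\tau(\lambda):=(\lambda\mathds 1+S(\tau))^{-1}$, $R(\lambda):=(\lambda\mathds 1+H)^{-1}$ and using the resolvent identity with base point $\zeta$ from (a) (note that $\Lambda$ lies in the same connected component of the common resolvent set as $\zeta$), one obtains $R_\tau(\lambda)-R(\lambda)=A_\tau(\lambda)\big[(R_\tau(\zeta)-R(\zeta))+(\zeta-\lambda)(R_\tau(\zeta)-R(\zeta))A(\lambda)R(\zeta)\big]$ with $A_\tau(\lambda)=\mathds 1+(\zeta-\lambda)R_\tau(\lambda)$ and $A(\lambda)=\mathds 1+(\zeta-\lambda)R(\lambda)$; the resolvent bound makes $\|A_\tau(\lambda)\|$ and $\|A(\lambda)\|$ bounded uniformly in $\tau$ and $\lambda\in\Lambda$, so that $\|R_\tau(\lambda)-R(\lambda)\|\le C_1(1+|\lambda|)\,\|R_\tau(\zeta)-R(\zeta)\|$ on $\Lambda$ with $C_1$ independent of $\tau$. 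Inserting this into the two contour representations and using that $|e^{t\lambda}|(1+|\lambda|)$ is integrable on $\Lambda$ for each fixed $t>0$, one gets $\|e^{-tS(\tau)}-e^{-tH}\|\le C_t\,\|(\zeta\mathds 1+S(\tau))^{-1}-(\zeta\mathds 1+H)^{-1}\|$, and (a) forces the right-hand side to $0$; thus (b) holds, in fact for every $t>0$ and locally uniformly on $(0,\infty)$.

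\textbf{(b)$\Rightarrow$(a).} Each $S(\tau)$ and $H$, being $m$-sectorial with numerical range in $S_\alpha$, generates a bounded holomorphic contraction semigroup; by the standard sectorial estimates (cf.\ \cite{Kat80}, Ch.\ IX) the maps $z\mapsto e^{-zS(\tau)}$ and $z\mapsto e^{-zH}$ are holomorphic on the sector $\Sigma_\theta:=\{z\neq0:|\arg z|<\theta\}$ for any $\theta<\pi/2-\alpha$, with $\sup_{z\in\Sigma_\theta}\|e^{-zS(\tau)}\|\le M_\theta$ where $M_\theta$ depends only on $\alpha$ and $\theta$, hence is independent of $\tau$. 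Therefore $G_\tau(z):=e^{-zS(\tau)}-e^{-zH}$ is holomorphic on $\Sigma_\theta$ with $\sup_\tau\sup_{z\in\Sigma_\theta}\|G_\tau(z)\|\le M_\theta+1<\infty$. By hypothesis $G_\tau(t)\to0$ in operator norm as $\tau\to+0$ for all $t$ in a subset of $(0,\infty)\subset\Sigma_\theta$ having a limit point in $\Sigma_\theta$; the operator-norm version of Vitali's theorem for vector-valued holomorphic functions then upgrades this to $G_\tau\to0$ uniformly on compact subsets of $\Sigma_\theta$, in particular $\|e^{-tS(\tau)}-e^{-tH}\|\to0$ for \emph{every} $t>0$. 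Finally, for $\RE\zeta>0$ the contraction property gives the absolutely convergent representation $(\zeta\mathds 1+S(\tau))^{-1}-(\zeta\mathds 1+H)^{-1}=\int_0^\infty e^{-s\zeta}\big(e^{-sS(\tau)}-e^{-sH}\big)\,ds$, and since the integrand is dominated by $2e^{-s\RE\zeta}\in L^1(\dR^+)$ and tends to $0$ pointwise, the dominated convergence theorem yields $\|(\zeta\mathds 1+S(\tau))^{-1}-(\zeta\mathds 1+H)^{-1}\|\to0$ for every $\zeta$ with $\RE\zeta>0$; such $\zeta$ belong to $S_{\pi-\alpha}$, so (a) follows.

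\textbf{Main obstacles.} The delicate ingredients are: in (b)$\Rightarrow$(a), the use of the \emph{operator-norm} (rather than merely weak-operator) form of Vitali's theorem, together with the verification that the holomorphic extensions $z\mapsto e^{-zS(\tau)}$ are bounded on $\Sigma_\theta$ by a constant \emph{independent of $\tau$} --- which is exactly where the common sector $S_\alpha$ is indispensable; and in (a)$\Rightarrow$(b), making the propagation of the resolvent-norm convergence from the single point $\zeta$ to the whole contour $\Lambda$ quantitative, i.e.\ with a majorant $C_1(1+|\lambda|)$ uniform in $\tau$, so that the exponential decay of $e^{t\lambda}$ on $\Lambda$ absorbs it. Everything else --- the numerical-range resolvent bounds, the Dunford--Taylor and Laplace representations, and the dominated convergence step --- is routine. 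We note that both implications in fact produce the stronger conclusion $\|e^{-tS(\tau)}-e^{-tH}\|\to0$ for all $t>0$, locally uniformly on $(0,\infty)$, so the weak form of (b) (convergence on a subset with a limit point) is equivalent to that strengthening.
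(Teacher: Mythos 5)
Your proposal is essentially correct, but note that the paper itself gives no proof of this Proposition: it is recalled verbatim from \cite{CZ01} (Lemma 4.1), so there is no in-paper argument to compare against line by line. What you have written is a sound, self-contained reconstruction, and it uses exactly the two devices the paper deploys for its own nearby results: for (a)$\Rightarrow$(b) the Riesz--Dunford contour representation over $\Gamma\subset S_{\pi-\alpha}$ with the numerical-range resolvent bound $\|(\lambda\mathds{1}+S(\tau))^{-1}\|\le C/\mathrm{dist}(-\lambda,\overline{S_\alpha})$ uniform in $\tau$ (cf.\ the proof of Theorem \ref{th:6.2.22}), and for (b)$\Rightarrow$(a) the Laplace-transform representation with dominated convergence (cf.\ the sufficiency part of Theorem \ref{th:6.2.25}). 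Your resolvent-propagation identity $R_\tau(\lambda)-R(\lambda)=A_\tau(\lambda)\,(R_\tau(\zeta)-R(\zeta))\,A(\lambda)$ is correct and does the job of carrying convergence from the single point $\zeta$ to the whole contour; in fact, since $\|A_\tau(\lambda)\|\le 1+|\zeta-\lambda|\,\|R_\tau(\lambda)\|$ is bounded uniformly on $\Gamma$, you even get a $\lambda$-uniform constant rather than $C_1(1+|\lambda|)$. Likewise the uniform bound $M_\theta$ in step (b)$\Rightarrow$(a) is simply $1$, because $W(e^{i\varphi}S(\tau))\subseteq e^{i\varphi}\overline{S_\alpha}$ lies in the closed right half-plane for $|\varphi|\le\pi/2-\alpha$, so each $e^{-zS(\tau)}$ is a contraction on the closed subsector. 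The genuinely nonroutine ingredient you add is the vector-valued Vitali theorem, which is exactly what the ``subset of $\dR^+$ having a limit point'' formulation calls for. One caveat deserves a sentence in your write-up: Vitali requires the accumulation point to lie \emph{inside} the domain of holomorphy, i.e.\ in $(0,\infty)\subset S_{\pi/2-\alpha}$; if the set accumulated only at $0$ or at $\infty$ the identity-theorem step would fail (a Blaschke-type example shows this is not a removable hypothesis), so the statement must be read with the limit point taken in $\dR^+$ itself.
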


Therefore, the estimate (\ref{eq:6.2.5}) together with Proposition \ref{th:6.2.21} and inequality
\begin{equation}\label{eq:6.2.51}
\|F(t/n)^n - e^{-t\, H}\| \leq \|F(t/n)^n - e^{-t S(t/n)}\| +  \|e^{-tS(t/n)} - e^{-tH}\| \, ,
\end{equation}
yield \textit{nonself-adjoint} operator-norm version of the Chernoff product
formula for quasi-sectorial contractions (cf. Proposition \ref{prop:3.1.0} in \cite{CZ01}):
\begin{proposition}\label{prop:2.1.12}
Let $\{F(\tau)\}_{\tau\geq 0}$ be a family of uniformly
quasi-sectorial contractions on a Hilbert space $\mathfrak{H}$, that is, $W(F(\tau) \subseteq D_\alpha$
{\rm{(\ref{eq:2.1.1})}}, for all $\tau > 0$. Let family $\{S(\tau)\}_{\tau > 0}$ be defined by
\emph{(\ref{eq:3.1.1}) and (\ref{eq:3.1.2})}. If $H$ is {$m$-sectorial} operator with
$W(H) \subset S_{\alpha}$, then
\begin{equation}\label{eq:2.1.17}
\lim_{n\rightarrow \infty} \left\|F(t/n)^n -e^{-t\, H}\right\| = 0 \, , \ \ \ \mbox{for} \ \ t> 0 \ ,
\end{equation}
if and only if
\begin{equation}\label{eq:2.1.171}
\lim_{\tau\rightarrow +0} \left\|(\zeta \mathds{1} + S(\tau))^{-1} -
(\zeta \mathds{1} + H)^{-1}\right\| = 0 \, , \ \ \ \mbox{ for some } \zeta\in S_{\pi-\alpha}\, .
\end{equation}
\end{proposition}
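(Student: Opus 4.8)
The plan is to read (\ref{eq:6.2.51}) as the basic estimate and to control its two summands by the two ingredients recalled above: the $1/n^{1/3}$-estimate of Proposition~\ref{th:6.2.2} for $\|F(t/n)^n - e^{-tS(t/n)}\|$, and the nonself-adjoint operator-norm Trotter--Neveu--Kato theorem (Proposition~\ref{th:6.2.21}) for $\|e^{-tS(t/n)} - e^{-tH}\|$. Two preliminary remarks prepare this. First, since each $F(\tau)$ is a quasi-sectorial contraction with $W(F(\tau))\subseteq D_\alpha$, the operator $\mathds{1}-F(\tau)$ is $m$-sectorial with vertex at $0$ and semi-angle $\alpha$, hence so is $S(\tau)=(\mathds{1}-F(\tau))/\tau$, because a sector with vertex $0$ is invariant under multiplication by the positive scalar $1/\tau$; thus $\{S(\tau)\}_{\tau>0}$ is a \emph{uniformly} $m$-sectorial family with $W(S(\tau))\subseteq S_\alpha$, which is exactly the hypothesis of Proposition~\ref{th:6.2.21}. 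Second, from $S(t/n)=n(\mathds{1}-F(t/n))/t$ one has the algebraic identity $e^{-tS(t/n)}=e^{\,n(F(t/n)-\mathds{1})}$, so that Proposition~\ref{th:6.2.2} applied to the quasi-sectorial contraction $F(t/n)$ yields
\be\la{eq:plan.1}
\big\|F(t/n)^n - e^{-tS(t/n)}\big\| \le \frac{M}{n^{1/3}}\,,\qquad n\in\dN\,,
\ee
with $M$ independent of $t$ and of $n$.

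For the direct implication I would assume (\ref{eq:2.1.171}), i.e. condition $(a)$ of Proposition~\ref{th:6.2.21}. By the equivalence $(a)\Leftrightarrow(b)$ of that proposition, $\|e^{-tS(\tau)}-e^{-tH}\|\to0$ as $\tau\to+0$ for $t$ in a set possessing a limit point; and since $\{e^{-tS(\tau)}\}_{t\ge0}$ and $\{e^{-tH}\}_{t\ge0}$ are holomorphic quasi-sectorial semigroups, this convergence in fact holds for every $t>0$, locally uniformly on $(0,\infty)$ (via the Riesz--Dunford representation (\ref{eq:3.2.1-1}) on a sector-adapted contour, as in Lemma~\ref{lem:3.2.1}). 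Fixing $t>0$ and taking $\tau=t/n$ then gives $\|e^{-tS(t/n)}-e^{-tH}\|\to0$ as $n\to\infty$, and combining this with (\ref{eq:plan.1}) in the triangle inequality (\ref{eq:6.2.51}) yields (\ref{eq:2.1.17}).

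For the converse I would read (\ref{eq:6.2.51}) in the other direction and use (\ref{eq:plan.1}) to get
\bed
\big\|e^{-tS(t/n)}-e^{-tH}\big\| \le \frac{M}{n^{1/3}} + \big\|F(t/n)^n-e^{-tH}\big\|\,,
\eed
so that assumption (\ref{eq:2.1.17}) forces $\|e^{-tS(t/n)}-e^{-tH}\|\to0$ as $n\to\infty$, for each fixed $t>0$. It then remains to upgrade this \emph{sequential} statement (convergence only along the particular null sequences $\tau=t/n$) to a genuine limit $\lim_{\tau\to+0}\|e^{-tS(\tau)}-e^{-tH}\|=0$ for a set of times $t$ having a limit point, so that the implication $(b)\Rightarrow(a)$ of Proposition~\ref{th:6.2.21} applies and delivers (\ref{eq:2.1.171}). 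This is the step I expect to be the main obstacle. I would resolve it through the uniform $m$-sectoriality of $\{S(\tau)\}_{\tau>0}$: it provides a holomorphy bound $\|S(\tau)e^{-tS(\tau)}\|\le C_\alpha/t$ with $C_\alpha$ independent of $\tau$, hence $\|e^{-tS(\tau)}-e^{-sS(\tau)}\|\le C_\alpha\ln(t/s)$ for $0<s\le t$, i.e. a modulus of continuity of $t\mapsto e^{-tS(\tau)}$ on compact subsets of $(0,\infty)$ that is uniform in $\tau$; combining this equicontinuity with a rescaling of the sequences $\{t/n\}_n$ and a diagonal extraction pins down the full $\tau\to+0$ behaviour of $e^{-tS(\tau)}$ along a sequence of times accumulating at an interior point of $(0,\infty)$, which suffices to conclude via Proposition~\ref{th:6.2.21}.
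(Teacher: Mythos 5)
Your decomposition is exactly the paper's: Proposition \ref{prop:2.1.12} is justified there by the single remark that the $n^{-1/3}$-estimate (\ref{eq:6.2.5}), the operator-norm Trotter--Neveu--Kato equivalence of Proposition \ref{th:6.2.21}, and the triangle inequality (\ref{eq:6.2.51}) combine, and your direct implication fills in those same steps correctly (including the identity $e^{-tS(t/n)}=e^{\,n(F(t/n)-\mathds{1})}$ and the propagation of resolvent convergence at one $\zeta$ to semigroup convergence for every $t>0$ via the Riesz--Dunford contour). You also deserve credit for flagging what the paper glosses over: in the converse direction the hypothesis only yields $\|e^{-tS(t/n)}-e^{-tH}\|\to0$ along the null sequences $\tau=t/n$, whereas condition $(b)$ of Proposition \ref{th:6.2.21} asks for the continuous limit $\tau\to+0$.

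However, your proposed repair of that step does not close the gap. Fix $t_0>0$ and, for small $\tau$, set $n=\lceil t_0/\tau\rceil$ and $t=n\tau\in[t_0,t_0+\tau)$. The uniform bound $\|S(\tau)e^{-tS(\tau)}\|\le C_\alpha/t$ does control $\|e^{-t_0S(\tau)}-e^{-tS(\tau)}\|$ and $\|e^{-tH}-e^{-t_0H}\|$, but the remaining middle term is
\[
\|e^{-tS(\tau)}-e^{-tH}\|\;\le\;\frac{M}{n^{1/3}}+\big\|F(t/n)^n-e^{-tH}\big\|\,,\qquad t=t(\tau),\ n=n(\tau)\,,
\]
and assumption (\ref{eq:2.1.17}) is only a pointwise-in-$t$ limit: it says nothing about $\|F(t(\tau)/n(\tau))^{n(\tau)}-e^{-t(\tau)H}\|$ when the time $t(\tau)$ drifts with $\tau$. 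A diagonal extraction cannot help, because the hypothesis gives information, for each fixed $t$, only along a single sequence of $\tau$'s, and these countably many sequences do not exhaust a neighbourhood of $\tau=0$; nor is $t\mapsto F(t/n)^n$ continuous, since $F$ is merely measurable. Your argument therefore tacitly requires (\ref{eq:2.1.17}) to hold locally uniformly in $t$, which is not assumed. Closing the necessity direction needs an additional mechanism (local uniformity in $t$, continuity of $\tau\mapsto F(\tau)$, or a Vitali-type argument for the uniformly bounded holomorphic families $z\mapsto e^{-zS(\tau)}-e^{-zH}$ on $S_{\pi/2-\alpha}$); the paper itself supplies none of this and simply defers to \cite{CZ01}.
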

\begin{remark}\label{rem:3.3.3}
Since semigroups $\{e^{- z \, S(\tau)}\}_{z \in S_{\pi/2-\alpha}}$, $\tau > 0$, and
$\{e^{- z \, H}\}_{z \in S_{\pi/2-\alpha}}$ are holomorphic in sector $S_{\pi/2-\alpha}$,
proof of the Chernoff product formula (\ref{eq:2.1.17}) for {nonself-adjoint}
quasi-sectorial contractions is based on the Riesz-Dunford functional calculus.
To establish the operator-norm convergence \textit{without} rate it
successfully (although \textit{not} completely, since $t > 0$) substitutes the self-adjointness
in the proofs in Sections \ref{sec:3.2} and \ref{sec:3.3}.
\end{remark}

We use this calculus to prove the operator-norm Chernoff product formula for
quasi-sectorial contractions \textit{with} estimate of the {rate} of convergence.
We \textit{Note} that this is a generalisation
of the Chernoff product formulae proven respectively in Theorem \ref{th:3.3.4}
(for self-adjoint case) and in Proposition \ref{prop:2.1.12} (without rate of convergence).
\begin{theorem}\label{th:6.2.22}
Let $\{F(\tau)\}_{\tau\geq 0}$ be a family of uniformly quasi-sectorial contractions on a Hilbert
space $\mathfrak{H}$ and let $\{S(\tau)\}_{\tau >0}$ be a family of $m$-sectorial operators
defined by \emph{(\ref{eq:3.1.1}) and (\ref{eq:3.1.2})}
for {$m$-sectorial} operator $H$ with $W(H)\subseteq S_\alpha$.
If there is  $L > 0$ such that estimate
\begin{equation}\label{est-res}
\left\|(\zeta \mathds{1} + S(\tau))^{-1} -
(\zeta \mathds{1} + H)^{-1}\right\| \leq { L \ \tau \over {\rm{dist}}(\zeta,-S_\alpha)}\, ,
\end{equation}
holds for $\zeta \in S_{\pi-\alpha} \ $, then for any bounded interval $\cI \subset \dR^{+}$ there is a
constant $C^\cI > 0$ such that estimate
\begin{equation}\label{est-Ch}
\sup_{t\in\cI}\|F(t/n)^n - e^{-tH}\| \le C^\cI \, \frac{1}{n^{1/3}} \ ,
\end{equation}
holds for $n \ge 1$.
\end{theorem}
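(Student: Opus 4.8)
The plan is to run the two‑ingredient argument of Theorem \ref{th:3.3.4}, with the self‑adjoint inputs replaced by their quasi‑sectorial analogues. For $t>0$ and $n\ge 1$ split
\[
\|F(t/n)^n - e^{-tH}\| \le \|F(t/n)^n - e^{-tS(t/n)}\| + \|e^{-tS(t/n)} - e^{-tH}\| ,
\]
where $e^{-tS(t/n)} = e^{n(F(t/n)-\mathds{1})}$, because $tS(t/n) = n(\mathds{1}-F(t/n))$ by definition \eqref{eq:3.1.1}. The first summand will be estimated by the $(1/\sqrt[3]{n})$‑Theorem, the second by a Riesz–Dunford bound built on \eqref{est-res}; both estimates will be uniform for $t$ in a bounded interval $\cI\subset\dR^{+}$.

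For the first summand, uniform quasi‑sectoriality means $W(F(\tau))\subseteq D_\alpha$ for all $\tau>0$, so Proposition \ref{th:6.2.2} applied with $F=F(t/n)$ gives
\[
\|F(t/n)^n - e^{-tS(t/n)}\| = \big\|F(t/n)^n - e^{n(F(t/n)-\mathds{1})}\big\| \le \frac{M}{n^{1/3}} ,
\]
with $M=2K+2$ and $K$ the constant of \eqref{eq:2.1.14}, which depends only on $\alpha$.

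Turning to the second summand: by uniform quasi‑sectoriality $\mathds{1}-F(\tau)$, hence $S(\tau)=(\mathds{1}-F(\tau))/\tau$, and $H$ are $m$‑sectorial with numerical ranges inside $\overline{S_\alpha}$, uniformly in $\tau$. Fix $\alpha'\in(\alpha,\pi/2)$, and for a fixed $t>0$ let $\Gamma_t$ be the sectorial contour made of the two rays $\{re^{\pm i\alpha'}:r\ge 1/t\}$ and the major circular arc of radius $1/t$ joining their endpoints around the origin through $-1/t$. It lies in $\dC\setminus\overline{S_\alpha}$, which contains $\sigma(S(\tau))\cup\sigma(H)$, and encircles that set, so by the sectorial functional calculus $e^{-tA}=\frac{1}{2\pi i}\int_{\Gamma_t}e^{-t\mu}(\mu-A)^{-1}\,d\mu$ for $A\in\{S(\tau),H\}$, whence
\[
e^{-tS(\tau)}-e^{-tH} = \frac{1}{2\pi i}\int_{\Gamma_t} e^{-t\mu}\big[(\mu-S(\tau))^{-1}-(\mu-H)^{-1}\big]\,d\mu .
\]
For $\mu\in\Gamma_t$ one has $-\mu\in S_{\pi-\alpha}$, so \eqref{est-res} with $\zeta=-\mu$ gives $\|(\mu-S(\tau))^{-1}-(\mu-H)^{-1}\|\le L\tau/{\rm dist}(\mu,S_\alpha)$. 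On the rays $|e^{-t\mu}|=e^{-tr\cos\alpha'}$ and ${\rm dist}(\mu,S_\alpha)=r\sin(\alpha'-\alpha)$, so the ray part is $\le c_1L\tau$ after the substitution $s=tr\cos\alpha'$ (whose lower limit $\cos\alpha'$ is independent of $t$); on the arc $|e^{-t\mu}|\le e$ since $t|\mu|=1$, while ${\rm dist}(\mu,S_\alpha)\ge t^{-1}\sin(\alpha'-\alpha)$ and the arc length is $\le 2\pi/t$, so the arc part is $\le c_2L\tau$, with $c_1,c_2$ depending only on $\alpha,\alpha'$. Thus $\|e^{-tS(\tau)}-e^{-tH}\|\le c\,\tau$ with $c=(c_1+c_2)L$; taking $\tau=t/n$ and $T:=\sup\cI$, this is $\|e^{-tS(t/n)}-e^{-tH}\|\le cT/n$ for $t\in\cI$.

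Adding the two bounds, $\sup_{t\in\cI}\|F(t/n)^n-e^{-tH}\|\le Mn^{-1/3}+cTn^{-1}\le(M+cT)\,n^{-1/3}$, which is \eqref{est-Ch} with $C^\cI:=M+cT$. The main obstacle is the second summand: the resolvent‑difference bound \eqref{est-res} degenerates like $1/{\rm dist}(\mu,S_\alpha)$ at the vertex $\mu=0$, so a contour through the origin yields a logarithmically divergent integral; the remedy is to detour around $0$ on a circle of radius $\sim 1/t$, which at once makes the ray integral $t$‑uniform and keeps the arc contribution $O(\tau)$. It is essential that on the arc one keeps the factor $\tau$ from \eqref{est-res}, rather than the crude $m$‑sectorial bound $\|(\mu-A)^{-1}\|\lesssim 1/{\rm dist}(\mu,S_\alpha)$, which there would contribute only $O(1)$. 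Extending to $\cI=\dR^{+}_{0}$ would, as in Theorem \ref{th:3.3.5}, require an additional hypothesis controlling $F(\tau)$ for large $\tau$.
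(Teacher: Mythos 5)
Your proof is correct and follows essentially the same route as the paper: the same splitting (\ref{eq:6.2.51}), Proposition \ref{th:6.2.2} for the term $\|F(t/n)^n-e^{-tS(t/n)}\|$, and a Riesz--Dunford contour estimate driven by (\ref{est-res}) for $\|e^{-tS(\tau)}-e^{-tH}\|$. The only difference is that you scale the circular arc of the contour as $1/t$ rather than fixing its radius $\delta$, which yields the cleaner $t$-uniform bound $\|e^{-tS(\tau)}-e^{-tH}\|\le c\,L\,\tau$ in place of the paper's $t$-dependent estimate (\ref{esa2}); both suffice on a bounded interval $\cI\subset\dR^{+}$.
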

\begin{proof}
Estimation of the last term in inequality (\ref{eq:6.2.51}). Since by (\ref{eq:3.1.1})
and conditions on $\{F(\tau)\}_{\tau\geq 0}$ operators $\{S(\tau)\}_{\tau>0}$ are $m$-sectorial
with $W(S(\tau))\subseteq S_\alpha$, the Riesz-Dunford formula
\begin{equation*}
e^{-tS(\tau)} = {1\over 2\pi i} \int_\Gamma d\zeta \ {e^{t\zeta}\over\zeta \mathds{1} + S(\tau)} \, ,
\end{equation*}
defines for $\tau>0$ a family of holomorphic semigroups
$\tau \mapsto \{e^{-tS(\tau)}\}_{t\in S_{\pi/2-\alpha}}$.
Here $\Gamma\subset S_{\pi-\alpha}$ is a positively-oriented closed (at infinity) contour in $\dC$
around $-S_\alpha$.
The same is true for $m$-sectorial operator $H$ since $W(H)\subseteq S_\alpha$:
\begin{equation*}
e^{-tH} = {1\over 2\pi i}\int_\Gamma d\zeta \ {e^{t\zeta}\over \zeta \mathds{1} + H}\, .
\end{equation*}

We define $\Gamma := \overline{\Gamma_\epsilon} \cup\Gamma_\delta \cup \Gamma_\epsilon$,
where the arc $\Gamma_\delta = \{z\in{\C}: \ |z|=\delta>0, |\arg z|\leq \pi-\alpha-\epsilon\}$
(for $0<\epsilon<\pi/2-\alpha$) and $\Gamma_\epsilon,\overline{\Gamma_\epsilon}$ are two conjugate
radial rays with $\Gamma_\epsilon = \{z\in{\C}: \ \arg z = \pi-\alpha-\epsilon,\ |z|\geq \delta\}$.
Then for $t>0$ one gets

\begin{equation}\label{esa1}
\| e^{-tS(\tau)} - e^{-tH}\| \leq {1\over 2\pi} \int_\Gamma |d\zeta||e^{t\zeta}|
\left\|(\zeta \mathds{1}+S(\tau))^{-1} - (\zeta \mathds{1} +H)^{-1}\right\|.
\end{equation}

Since operators $\{S(\tau)\}_{\tau>0}$ and $H$ are $m$-sectorial with numerical ranges in open sector
$S_\alpha$, by condition (\ref{est-res}) we obtain:
\begin{eqnarray}\label{est1}
&&\left\|(\zeta \mathds{1}+S(\tau))^{-1} - (\zeta \mathds{1}+H)^{-1}\right\| \leq
{ L \, \tau \over \delta \, \sin \varepsilon}\, , \  \ \ \rm{for} \ \zeta \in \Gamma_\delta \ , \\
&&\left\|(\zeta \mathds{1}+S(\tau))^{-1} - (\zeta \mathds{1}+H)^{-1}\right\| \leq
{ L \, \tau \over |\zeta|\, \sin \varepsilon}\, , \  \ \rm{for} \
\zeta \in \Gamma_\epsilon \vee \overline{\Gamma_\epsilon} \ .
\label{est2}
\end{eqnarray}
Then for $t>0$ the estimates (\ref{esa1}) and (\ref{est1}), (\ref{est2}) yield
\begin{equation}\label{esa2}
\| e^{-tS(\tau)} - e^{-tH}\| \leq
\frac{L \tau}{\sin \varepsilon} \, \left\{e^{t \delta } +
\frac{e^{- t \delta \,\cos (\alpha + \varepsilon)}}{\pi \, t \, \delta \,
\cos (\alpha + \varepsilon)}\right\}\, .
\end{equation}
Hence, inequality (\ref{esa2}) proves the existence of $N^\cI$ such that for $\tau = t/n$
\begin{equation}\label{esa3}
\sup_{t\in\cI}\| e^{-tS(t/n)} - e^{-tH}\| \leq N^\cI \ \frac{1}{n} \, .
\end{equation}

To estimate the first term in the right-hand side of (\ref{eq:6.2.51}) we use Proposition \ref{th:6.2.2}.
Then
\begin{equation}\label{esa4}
\|F(t/n)^n - e^{-t S(t/n)}\|  \leq M \ {1 \over n^{1/3}} \ .
\end{equation}
The inequalities (\ref{eq:6.2.51}) and (\ref{esa3}), (\ref{esa4}) prove the estimate (\ref{est-Ch}).
\end{proof}
\begin{remark}\label{rem:6.2.23}
Note that the rate (\ref{est-Ch}) of the operator-norm convergence of the Chernoff product formula for
quasi-sectorial contractions is slower then for the self-adjoint case (\ref{eq:3.3.18}). This rate is
limited by \textit{non-optimal} estimate due to Proposition \ref{th:6.2.2}.
\end{remark}
\begin{corollary}\label{cor:6.2.24}
Let $\{S(\tau)\}_{\tau >0}$ be a family of $m$-sectorial operators
defined by \emph{(\ref{eq:3.1.1}) and (\ref{eq:3.1.2})} with $W(S(\tau))\subseteq S_\alpha$
for some $\alpha \in [0, \pi/2)$.
Let $H$ be an {$m$-sectorial} operator with $W(H)\subseteq S_\alpha$. Then there exist $M' > 0$ and
$\delta' > 0$ such that
\begin{equation}\label{esa5}
\| e^{-tS(\tau)} - e^{-tH}\| \leq
\frac{M'}{t} \ e^{t \, \delta'} \,
\left\|(\mathds{1}+S(\tau))^{-1} - (\mathds{1}+H)^{-1}\right\| ,
\end{equation}
holds for positive $\tau$ and $t$.
\end{corollary}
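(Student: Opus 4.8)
The plan is to run the Riesz--Dunford contour argument of the proof of Theorem \ref{th:6.2.22}, but with condition (\ref{est-res}) replaced by an exact factorization of the resolvent difference through its value at $\zeta=1$. For the $m$-sectorial operators $S(\tau),H$ with numerical ranges in $S_\alpha$ and for any $\zeta\in S_{\pi-\alpha}$ one has the $\zeta$-analogue of identity (\ref{eq:3.2.71}),
\[
(\zeta\mathds{1}+S(\tau))^{-1}-(\zeta\mathds{1}+H)^{-1}
=(\mathds{1}+S(\tau))(\zeta\mathds{1}+S(\tau))^{-1}
\big[(\mathds{1}+S(\tau))^{-1}-(\mathds{1}+H)^{-1}\big]
(\mathds{1}+H)(\zeta\mathds{1}+H)^{-1},
\]
in which, exactly as in the proof of Lemma \ref{lem:3.2.2}, the outer factors are the bounded operators $(\mathds{1}+A)(\zeta\mathds{1}+A)^{-1}=\mathds{1}+(1-\zeta)(\zeta\mathds{1}+A)^{-1}$ for $A\in\{S(\tau),H\}$. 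Since $W(S(\tau)),W(H)\subseteq S_\alpha$, the numerical-range estimate yields $\|(\zeta\mathds{1}+A)^{-1}\|\le 1/\mathrm{dist}(\zeta,-S_\alpha)$ on $S_{\pi-\alpha}$, hence $\|(\mathds{1}+A)(\zeta\mathds{1}+A)^{-1}\|\le 1+|1-\zeta|/\mathrm{dist}(\zeta,-S_\alpha)$.

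Next I would insert this factorization into the Riesz--Dunford representation
\[
\|e^{-tS(\tau)}-e^{-tH}\|\le\frac{1}{2\pi}\int_\Gamma|d\zeta|\,|e^{t\zeta}|\,
\big\|(\zeta\mathds{1}+S(\tau))^{-1}-(\zeta\mathds{1}+H)^{-1}\big\|,
\]
where $\Gamma=\overline{\Gamma_\varepsilon}\cup\Gamma_\delta\cup\Gamma_\varepsilon$ is the contour fixed in the proof of Theorem \ref{th:6.2.22}. On the arc $\Gamma_\delta$ one has $\mathrm{dist}(\zeta,-S_\alpha)\ge\delta\sin\varepsilon$ and $|1-\zeta|\le 1+\delta$, while on the radial rays $\mathrm{dist}(\zeta,-S_\alpha)=|\zeta|\sin\varepsilon$ and $|1-\zeta|\le(1+1/\delta)|\zeta|$ because $|\zeta|\ge\delta$; therefore each outer factor is bounded on all of $\Gamma$ by $\widetilde K:=1+(1+1/\delta)/\sin\varepsilon$, their product by $\widetilde K^{2}$, and the resolvent difference at $\zeta=1$ factors out:
\[
\|e^{-tS(\tau)}-e^{-tH}\|\le\frac{\widetilde K^{2}}{2\pi}
\Big(\int_\Gamma|d\zeta|\,|e^{t\zeta}|\Big)
\big\|(\mathds{1}+S(\tau))^{-1}-(\mathds{1}+H)^{-1}\big\|.
\]

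Then it remains to estimate the contour integral exactly as in the step leading to (\ref{esa2}): the arc contributes at most $2\pi\delta\,e^{t\delta}$ and the two rays at most $2/(t\cos(\alpha+\varepsilon))$, so $\int_\Gamma|d\zeta|\,|e^{t\zeta}|\le 2\pi\delta\,e^{t\delta}+2/(t\cos(\alpha+\varepsilon))$. Choosing any $\delta'>\delta$ and using $s\,e^{-(\delta'-\delta)s}\le 1/(e(\delta'-\delta))$ for $s>0$ together with $e^{t\delta'}\ge 1$ converts this into $(M''/t)\,e^{t\delta'}$ with $M'':=2\pi\delta/(e(\delta'-\delta))+2/\cos(\alpha+\varepsilon)$, whence (\ref{esa5}) holds for all $t,\tau>0$ with $M':=\widetilde K^{2}M''/(2\pi)$, the constants depending only on $\delta,\varepsilon$ (hence only on $\alpha$). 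The one point that needs care is the closed-operator factorization and the boundedness of $(\mathds{1}+A)(\zeta\mathds{1}+A)^{-1}$ for unbounded $m$-sectorial $A$, handled by the same closure argument as in Lemma \ref{lem:3.2.2}; after that the remaining contour bookkeeping is essentially a rerun of the proof of Theorem \ref{th:6.2.22}, and I do not expect a genuine obstacle.
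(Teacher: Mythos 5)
Your proof is correct, and it follows exactly the route the paper's machinery points to: the paper states Corollary \ref{cor:6.2.24} without printing a proof, but the intended argument is precisely your combination of the $\zeta$-analogue of the factorization identity (\ref{eq:3.2.71}) with the Riesz--Dunford contour estimates (\ref{esa1})--(\ref{esa2}) from the proof of Theorem \ref{th:6.2.22}, the arc producing the $e^{t\delta'}$ factor and the radial rays the $1/t$. Your bookkeeping of the constants (the numerical-range bound $\|(\zeta\mathds{1}+A)^{-1}\|\le 1/\mathrm{dist}(\zeta,-S_\alpha)$, the uniform bound $\widetilde K$ on the outer factors along $\Gamma$, and the absorption of $2\pi\delta\,e^{t\delta}$ into $(M''/t)e^{t\delta'}$ via $\sup_{t>0}te^{-(\delta'-\delta)t}<\infty$) is sound.
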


For extension of Theorem \ref{th:6.2.22} to $\dR^{+}_{0}$ the estimate (\ref{esa5}) suggests a
weaker form of the operator-norm Trotter-Neveu-Kato theorem.
We \textit{Note} that, under \textit{stronger} than (\ref{est-res}) (the $t$-dependent resolvent condition,
see (\ref{eq:3.3.1}) for $\rho = 1$)
we obtain a new version of Proposition \ref{th:6.2.21}.
\begin{theorem}\label{th:6.2.25}
Let $\{F(\tau)\}_{\tau\geq 0}$ be a family of uniformly quasi-sectorial contractions on a
Hilbert space $\mathfrak{H}$. Let $\{S(\tau)\}_{\tau >0}$ be a family of $m$-sectorial operators
defined by \emph{(\ref{eq:3.1.1}), (\ref{eq:3.1.2})} with $W(S(\tau))\subseteq S_\alpha$
for some $\alpha \in [0, \pi/2)$ and for {$m$-sectorial} operator $H$ with
$W(H)\subseteq S_\alpha$. Then estimate
\be\la{eq:3.2.161}
\sup_{t\in \cI} \, \left\|(\zeta \mathds{1} + t S(\tau))^{-1} -
(\zeta \mathds{1} + t H)^{-1}\right\| \leq { L^{\cI}  \tau \over {\rm{dist}}(\zeta,-S_\alpha)}\, , \ \ \ \
\zeta \in S_{\pi-\alpha}\, ,
\ee
holds for any interval $\cI \subseteq \dR^{+}_{0}$ if and only if the condition
\be\la{eq:3.2.162}
\sup_{t\in \cI} \, \|e^{-tS(\tau)} - e^{-tH}\| \leq K^{\cI}  \tau \ ,
\ee
is valid for any interval $\cI \subseteq \dR^{+}_{0}$.
\end{theorem}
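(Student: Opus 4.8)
The plan is to prove the two implications separately, by the two techniques that recur throughout the paper: the holomorphic (Riesz--Dunford) functional calculus for \eqref{eq:3.2.161}$\Rightarrow$\eqref{eq:3.2.162}, and the Laplace transform of the semigroups, continued analytically into their sector of holomorphy, for the converse. Two preliminary reductions organize the bookkeeping. First, since $\dR^{+}_{0}$ is itself an interval, each statement ``for any interval $\cI\subseteq\dR^{+}_{0}$'' is equivalent to its single instance $\cI=\dR^{+}_{0}$. Second, the homogeneity relations $\mathrm{dist}(t\zeta,-S_\alpha)=t\,\mathrm{dist}(\zeta,-S_\alpha)$ and $(\zeta\mathds{1}+tS(\tau))^{-1}=t^{-1}(t^{-1}\zeta\,\mathds{1}+S(\tau))^{-1}$ show that \eqref{eq:3.2.161} (for all $t>0$) is equivalent to its $t=1$ instance, i.e.\ to condition \eqref{est-res} of Theorem \ref{th:6.2.22}. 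Finally, since $\{F(\tau)\}$ is a family of uniformly quasi-sectorial contractions, the operators $tS(\tau)$ and $tH$ are $m$-sectorial with numerical range in $S_\alpha$ for every $t>0$, so that $\sigma(tS(\tau))\cup\sigma(tH)\subseteq\overline{S_\alpha}$.

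For \eqref{eq:3.2.161}$\Rightarrow$\eqref{eq:3.2.162} I would fix once and for all a positively oriented contour $\Gamma\subset S_{\pi-\alpha}$ around $-\overline{S_\alpha}$ of the type used in the proof of Theorem \ref{th:6.2.22} (two conjugate rays $\arg z=\pm(\pi-\alpha-\epsilon)$, $|z|\ge\delta$, closed by the arc $|z|=\delta$). Because the spectra of $tS(\tau)$ and $tH$ stay inside $\overline{S_\alpha}$ for \emph{all} $t>0$, the time-one Riesz--Dunford formula reads, with the \emph{same} $\Gamma$,
\[
e^{-tS(\tau)}-e^{-tH}=\frac{1}{2\pi i}\int_\Gamma e^{w}\Bigl[(w\mathds{1}+tS(\tau))^{-1}-(w\mathds{1}+tH)^{-1}\Bigr]\,dw .
\]
Inserting the $t$-dependent resolvent bound \eqref{eq:3.2.161} pointwise on $\Gamma$ gives
\[
\sup_{t\in\cI}\bigl\|e^{-tS(\tau)}-e^{-tH}\bigr\|\le\frac{L^{\cI}\tau}{2\pi}\int_\Gamma\frac{|e^{w}|}{\mathrm{dist}(w,-S_\alpha)}\,|dw| ,
\]
and the last integral is a finite geometric constant: on the rays $|e^{w}|=e^{-|w|\cos(\alpha+\epsilon)}$ decays exponentially while $\mathrm{dist}(w,-S_\alpha)\ge|w|\sin\epsilon$, and on the arc $\mathrm{dist}(w,-S_\alpha)\ge\delta\sin\epsilon$. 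Together with the trivial case $t=0$ this is \eqref{eq:3.2.162}, with $K^{\cI}$ a fixed multiple of $L^{\cI}$. It is precisely the $t$-dependence kept inside the resolvent in \eqref{eq:3.2.161} that permits one and the same contour to serve for every $t$.

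For \eqref{eq:3.2.162}$\Rightarrow$\eqref{eq:3.2.161} it suffices, by the reductions, to derive \eqref{est-res} from $\sup_{r\ge 0}\|e^{-rS(\tau)}-e^{-rH}\|\le K\tau$; put $R(\zeta):=(\zeta\mathds{1}+S(\tau))^{-1}-(\zeta\mathds{1}+H)^{-1}$. For $\RE\zeta>0$ the Laplace representation $R(\zeta)=\int_0^{\infty}e^{-\zeta s}\,(e^{-sS(\tau)}-e^{-sH})\,ds$ gives $\|R(\zeta)\|\le K\tau/\RE\zeta$; on the closed sub-sector $\overline{S_{\pi/2-\alpha}}$ (take first $\alpha\in(0,\pi/2)$; the case $\alpha=0$ is the self-adjoint situation of Section \ref{sec:3.3}) one has $\RE\zeta\ge|\zeta|\sin\alpha$ and $\mathrm{dist}(\zeta,-S_\alpha)=|\zeta|$, hence already $\|R(\zeta)\|\le(K/\sin\alpha)\,\tau/\mathrm{dist}(\zeta,-S_\alpha)$. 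To reach the remaining range $\pi/2-\alpha\le|\arg\zeta|<\pi-\alpha$ I would rotate the Laplace contour into the holomorphy sector $S_{\pi/2-\alpha}$ of the contraction semigroups $e^{-zS(\tau)},\,e^{-zH}$: for $\zeta\in S_{\pi-\alpha}$ choose $\theta_0$ with $|\theta_0|<\pi/2-\alpha$ and $\RE(\zeta e^{i\theta_0})>0$ — such a $\theta_0$ exists, and can be chosen with $\RE(\zeta e^{i\theta_0})\asymp\mathrm{dist}(\zeta,-S_\alpha)$ — which by deformation and analytic continuation yields
\[
R(\zeta)=e^{i\theta_0}\!\int_0^{\infty}e^{-\zeta s e^{i\theta_0}}\bigl(e^{-s(e^{i\theta_0}S(\tau))}-e^{-s(e^{i\theta_0}H)}\bigr)\,ds .
\]
Now $e^{i\theta_0}S(\tau),\,e^{i\theta_0}H$ form an $m$-sectorial pair with numerical range in $S_{\alpha+|\theta_0|}\subset S_{\pi/2}$, and the bound $\|(\mathds{1}+e^{i\theta_0}S(\tau))^{-1}-(\mathds{1}+e^{i\theta_0}H)^{-1}\|=\|R(e^{-i\theta_0})\|\le(K/\sin\alpha)\tau$ is already in hand since $e^{-i\theta_0}\in\overline{S_{\pi/2-\alpha}}$; feeding it into Corollary \ref{cor:6.2.24} (applied to the rotated pair) controls the complex-time difference under the integral, and integration in $s$ against $e^{-\RE(\zeta e^{i\theta_0})s}$ produces $\|R(\zeta)\|\lesssim\tau/\mathrm{dist}(\zeta,-S_\alpha)$. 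Undoing the $t$-scaling then gives \eqref{eq:3.2.161} for every $\cI$.

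I expect the delicate point to be this final step. As $\zeta$ approaches $\partial S_{\pi-\alpha}$ the resolvents $(\zeta\mathds{1}+S(\tau))^{-1}$ and $(\zeta\mathds{1}+H)^{-1}$ each blow up like $\mathrm{dist}(\zeta,-S_\alpha)^{-1}$, so the argument must track the cancellation inside $R(\zeta)$ and verify that the passage from the sub-sector $\overline{S_{\pi/2-\alpha}}$ costs only a bounded factor and not an extra power of $\mathrm{dist}(\zeta,-S_\alpha)^{-1}$; this is exactly why the real-axis Laplace transform alone is insufficient and one is forced to match the exponential decay rate $\RE(\zeta e^{i\theta_0})$ of the rotated representation to $\mathrm{dist}(\zeta,-S_\alpha)$. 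Once that matching is in place, the remaining work — splitting the $s$-integral near $s=0$ with the crude bound $\|e^{-zS(\tau)}-e^{-zH}\|\le 2$ and estimating the tail via Corollary \ref{cor:6.2.24} — is routine.
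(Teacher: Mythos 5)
Your overall architecture coincides with the paper's: necessity of \eqref{eq:3.2.162} is obtained from the Riesz--Dunford integral over a contour $\Gamma\subset S_{\pi-\alpha}$ with the $t$-dependent resolvent bound \eqref{eq:3.2.161} inserted pointwise (the paper performs the change of variables $z=t\zeta$ and reuses the estimate \eqref{esa2} with $\delta$ replaced by $\delta/t$, which is the same computation as your fixed-contour-plus-homogeneity version), and sufficiency is obtained from the Laplace transform in the half-plane $S_{\pi/2}$ followed by rotation of the ray of integration into the holomorphy sector $S_{\pi/2-\alpha}$, with $\RE(e^{i\varphi}\zeta)$ matched to ${\rm dist}(\zeta,-S_\alpha)$ --- which is verbatim the paper's final step \eqref{esa41}. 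Your reduction to $t=1$ by homogeneity and your identification of the boundary matching as the delicate point are both accurate.

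The one place where you depart from the paper is the step that would fail. To bound the integrand $\|e^{-se^{i\theta_0}S(\tau)}-e^{-se^{i\theta_0}H}\|$ on the rotated ray you appeal to Corollary \ref{cor:6.2.24} applied to the rotated pair. That corollary carries the factor $M'e^{s\delta'}/s$, and integrating $s^{-1}e^{(\delta'-\RE(\zeta e^{i\theta_0}))s}$ over the tail diverges whenever $\RE(\zeta e^{i\theta_0})\le\delta'$ --- i.e., precisely for $\zeta$ near $\partial S_{\pi-\alpha}$ or near the origin, where \eqref{eq:3.2.161} still asserts a finite bound $L^{\cI}\tau/{\rm dist}(\zeta,-S_\alpha)$ --- and even when $\RE(\zeta e^{i\theta_0})>\delta'$ the cutoff $s_0\sim\tau$ forced by the crude bound $\le 2$ near $s=0$ leaves a tail contribution of order $\tau\ln(1/\tau)$ rather than $O(\tau)$. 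So this route does not produce \eqref{eq:3.2.161}. The paper instead carries the hypothesis bound $K^{\cI}\tau$ itself along the rotated rays, reproducing the half-plane computation with $\RE\zeta$ replaced by $\RE(e^{i\varphi}\zeta)$; whether or not one finds the paper fully explicit about extending \eqref{eq:3.2.162} from real to complex times, your substitute is quantitatively weaker and needs to be replaced by a uniform $O(\tau)$ bound for the difference of the two holomorphic semigroups on closed subsectors of $S_{\pi/2-\alpha}$, not by Corollary \ref{cor:6.2.24}.
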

\begin{proof}
Necessity of (\ref{eq:3.2.162}): As in the proof of Theorem \ref{th:6.2.22} we use the Riesz-Dunford
functional calculus for holomorphic semigroups to obtain estimate
\begin{equation}\label{esa11}
\| e^{-tS(\tau)} - e^{-tH}\| \leq {1\over 2\pi} \int_\Gamma |d z||e^{z}|
\left\|(z \mathds{1}+t\,S(\tau))^{-1} - (z \mathds{1} +t\,H)^{-1}\right\| \, ,
\end{equation}
for the same contour $\Gamma \subset S_{\pi-\alpha}$. After change of variable: $z = t \, \zeta$,
the right-hand side of estimate (\ref{esa11}) gets the same expression as (\ref{esa2}), but for $\delta$
substituted by $\delta/t$. This yields (\ref{eq:3.2.162}) for any bounded interval $\cI \subset \dR^{+}_{0}$.

Sufficiency of (\ref{eq:3.2.162}): First, using the Laplace transform, we estimate:
\begin{equation}\label{esa21}
\left\|(\zeta \mathds{1}+t\,S(\tau))^{-1} - (\zeta \mathds{1} +t\,H)^{-1}\right\|
\leq \int^\infty_0 ds \ e^{-s \, \RE\zeta} \big\|e^{-s \, t \, S(\tau)} - e^{-s \, t \, H}\big\|\, ,
\end{equation}
in the half-plane $\dC_{+} = S_{\pi/2}$. To this aim we
let $\varepsilon \in (0,1)$ and  $N_{\varepsilon} := - \ln(\varepsilon /2)$ such that for
$\zeta \in S_{\pi/2}$ and $\tau > 0$, $t \ge 0$
\begin{equation*}
\int^{\infty}_{N_{\varepsilon}} ds \ e^{- s \, \RE\zeta}\left\|e^{-s \, t \, S(\tau)} -
e^{- s \, t \, H}\right\| \le {\varepsilon} \, .
\end{equation*}
Therefore, one gets
\begin{equation*}
\left\|(\zeta \mathds{1}+t\,S(\tau))^{-1} - (\zeta \mathds{1} +t\,H)^{-1}\right\| \le
\int^{N_{\varepsilon}}_{0} ds \ e^{-s \, \RE\zeta}\left\|e^{-s\,t \,S(\tau)} - e^{-s\,t \,H}\right\| +
{\varepsilon} \ ,
\end{equation*}
that by condition (\ref{eq:3.2.162}) for any interval $\cI \subseteq \dR^{+}_{0}$ and
$\varepsilon \in (0,1)$ yields
\begin{eqnarray}\label{esa31}
&&\sup_{t\in\cI}\left\|(\zeta \mathds{1} +tS(\tau))^{-1} - (\zeta \mathds{1} +tH)^{-1}\right\| \le \\
&& \frac{1}{\RE\zeta}\sup_{\ba{c} t\in\cI \wedge s\in [0,N_{\varepsilon}]\ea}\left\|e^{-s\,tS(\tau)} -
e^{-s\,tH}\right\| + {\varepsilon} \leq  \frac{1}{\RE\zeta} \ K^{\dR^{+}_{0}}  \tau  + {\varepsilon} \ .
\nonumber
\end{eqnarray}
Since $\varepsilon$ may be arbitrary small, we obtain the estimate (\ref{eq:3.2.161}) for
any $\zeta \in S_{\pi/2}$.

For extension of $\zeta$ to sector $S_{\pi-\alpha}$ we note that semigroups involved
into estimate (\ref{esa21}) are holomorphic in sector $S_{\pi/2-\alpha}$. Therefore, the Laplace transform
is also valid for integration along the radial rays:  $\, s \, e^{i \varphi} \in S_{\pi/2-\alpha}\, $. Then
conditions for convergence of Laplace integrals take the form:
\begin{equation}\label{esa41}
- {\pi}/{2}< \varphi + \arg\zeta < {\pi}/{2} \ \ \  \ \wedge \ \ \
-(\pi/2-\alpha) < \varphi < (\pi/2-\alpha) \, ,
\end{equation}
that yields $\arg\zeta \in S_{\pi-\alpha}$ and makes $\RE(e^{i \varphi} \zeta)$ proportional to
${\rm{dist}}(\zeta,-S_\alpha)$.
\end{proof}
\begin{corollary}\label{cor:6.2.26}
Linear $\tau$-estimate {\rm{(\ref{eq:3.2.162})}} implies
\be\la{eq:3.2.163}
\sup_{t\in \cI} \, \|e^{-tS(t/n)} - e^{-tH}\| \leq K^{\cI}_{1} \frac{1}{n} \ ,
\ee
for any $n \in \mathbb{N}$ and any bounded interval $\cI \subset \dR^{+}_{0}$.
Then the same line of reasoning as in {\rm{Theorem \ref{th:6.2.22}}} yields
under $t$-dependent resolvent condition {\rm{(\ref{eq:3.2.161})}} the estimate
\begin{equation}\label{est-Ch1}
\sup_{t\in\cI}\|F(t/n)^n - e^{-tH}\| \le C^\cI_{1} \, \frac{1}{n^{1/3}} \ ,
\end{equation}
for $n \ge 1$ and any bounded interval $\cI \subset \dR^{+}_{0}$.
\end{corollary}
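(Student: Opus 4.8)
The plan is to assemble \eqref{eq:3.2.163} and then \eqref{est-Ch1} from pieces already in hand: the equivalence of Theorem~\ref{th:6.2.25}, a rescaling $\tau \mapsto t/n$ on a bounded interval, and the $n^{-1/3}$-bound of Proposition~\ref{th:6.2.2} used exactly as in Theorem~\ref{th:6.2.22}. For the implication \eqref{eq:3.2.162}~$\Rightarrow$~\eqref{eq:3.2.163}, fix a bounded interval $\cI \subset \dR^{+}_{0}$ and pick $N \in \dN$ with $\cI \subseteq [0,N]$. Since the constant $K^{[0,N]}$ in \eqref{eq:3.2.162} does not depend on $\tau$, for each $t_{0} \in [0,N]$ I apply \eqref{eq:3.2.162} to the interval $[0,N]$ with the \emph{frozen} scale $\tau = t_{0}/n$; as $t_{0} \in [0,N]$ this in particular bounds the term at $t = t_{0}$, so $\|e^{-t_{0}S(t_{0}/n)} - e^{-t_{0}H}\| \le K^{[0,N]}\, t_{0}/n \le K^{[0,N]}\, N/n$. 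Taking the supremum over $t_{0} \in \cI$ gives \eqref{eq:3.2.163} with $K^{\cI}_{1} := N\, K^{[0,N]}$. (Here $e^{-tS(t/n)}$ is read as $e^{-n(\mathds{1}-F(t/n))}$; the endpoint $t=0$, if it lies in $\cI$, contributes $0$ since $F(0)=\mathds{1}$.)

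For \eqref{est-Ch1} I start from the hypothesis \eqref{eq:3.2.161}, pass to \eqref{eq:3.2.162} by Theorem~\ref{th:6.2.25}, and then to \eqref{eq:3.2.163} by the implication just proved. Following the proof of Theorem~\ref{th:6.2.22}, the splitting~\eqref{eq:6.2.51} bounds $\|F(t/n)^{n} - e^{-tH}\|$ (for $t>0$, $n \ge 1$) by $\|F(t/n)^{n} - e^{-tS(t/n)}\| + \|e^{-tS(t/n)} - e^{-tH}\|$; the second summand is at most $K^{\cI}_{1}/n$ by \eqref{eq:3.2.163}. For the first, each $F(t/n)$ is a quasi-sectorial contraction with numerical range in $D_{\alpha}$, so Proposition~\ref{th:6.2.2} applied to $F = F(t/n)$ --- together with $n(F(t/n)-\mathds{1}) = -t\,S(t/n)$, hence $e^{n(F(t/n)-\mathds{1})} = e^{-tS(t/n)}$ --- bounds it by $M\,n^{-1/3}$, $M = 2K+2$, uniformly in $t$ and $n$. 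Since $1/n \le n^{-1/3}$ for $n \ge 1$, summing the two bounds and taking the supremum over $t \in \cI$ yields \eqref{est-Ch1} with $C^{\cI}_{1} := M + K^{\cI}_{1}$.

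The one genuine subtlety --- and the step I would treat most carefully --- is the rescaling $\tau \mapsto t/n$: \eqref{eq:3.2.162} is a family of bounds indexed by a single frozen scale $\tau$, whereas \eqref{eq:3.2.163} asks for uniformity in $t$ once the scale has been tied to $t$. This goes through precisely because the constant produced by Theorem~\ref{th:6.2.25} is $\tau$-independent and $\cI$ is bounded, so $t/n$ stays controlled; the remainder is a transcription of the proof of Theorem~\ref{th:6.2.22}, with its Riesz-Dunford estimate of the semigroup difference replaced by the ready-made bound \eqref{eq:3.2.163}.
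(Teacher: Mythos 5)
Your proof is correct and follows the route the paper intends: the direct substitution $\tau=t/n$ in \eqref{eq:3.2.162} (using boundedness of $\cI$ and the $\tau$-independence of $K^{\cI}$) to get \eqref{eq:3.2.163}, then the decomposition \eqref{eq:6.2.51} with Proposition \ref{th:6.2.2} supplying the $n^{-1/3}$ term exactly as in Theorem \ref{th:6.2.22}, with the passage from \eqref{eq:3.2.161} to \eqref{eq:3.2.162} via Theorem \ref{th:6.2.25}. The "frozen scale" point you flag is handled correctly and is indeed the only step that needs care.
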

We \textit{Note} that extension (\ref{est-Ch1}) of the operator-norm Chernoff product formula
for quasi-sectorial contractions on $\dR^{+}_{0}$ inherits the estimate of the {rate} of convergence
established in Proposition \ref{th:6.2.2}.

\section{Comments: Trotter-Kato product formulae}\label{sec:3.5}
The first application of the Chernoff product formula (Proposition \ref{prop:3.1.0})
was the proof of the strongly convergent Trotter product formula, see \cite{Che68}:
\begin{equation}\label{Trot}
\slim_{n\to\infty} \big(e^{-tA/n}e^{-tB/n}\big)^n = e^{-tH} \, , \ \ \ t\geq 0.
\end{equation}
Here $A$ and $B$ are positive self-adjoint operators in a Hilbert space $\mathfrak{H}$
with domains $\dom\, A$ and $\dom\,B$ such that
$\dom\,A \cap \dom\,B = {\rm{core}}\, H$ of the self-adjoint $H$. Note that operator family
$\{F(t) := e^{-tA}e^{-tB}\}_{t\geq 0}$ is not self-adjoint.

Later the operator-norm Chernoff product formula from Sections \ref{sec:3.2} and \ref{sec:3.3}
was used to lift (\ref{Trot}) to operator-norm topology, as well as to extend it from the
\textit{exponential} Trotter product formula to the \textit{Trotter-Kato} product formulae
for Kato functions $\mathcal{K}$, see \cite{NZ98}.

Recall that if a real-valued Borel measurable function $f: [0, \infty) \rightarrow [0,1]$ satisfying
\begin{equation}\label{eq:K-F}
  0 \leq f(s) \leq 1, \quad f(0) = 1, \quad f'(+0) = -1 \, ,
\end{equation}
then $f \in \mathcal{K}$. Different conditions on local continuity at $t = +0$ and global behaviour
on $\dR^{+}$ select subclasses of the Kato functions, see Appendix C in \cite{Zag19}.
If functions $f,g \in \mathcal{K}$ substitute exponents in formula (\ref{Trot}) then it is called the
{Trotter-Kato} product formula.

To apply a full power of the self-adjoint Chernoff product formula (Sections \ref{sec:3.2} and \ref{sec:3.3})
we symmetrise and produce \textit{self-adjoint} family
$\{F(t) := g(tB)^{1/2}f(tA)g(tB)^{1/2}\}_{t\geq 0}$. Let positive operators
$A$ and $B$ be such that operator $A+B =: H \geq \mu \mathds{1}$ is self-adjoint. If $F(t)$ is
sufficiently smooth at $t = +0$ and satisfy (\ref{eq:3.3.20}) (see \cite{IT01} (1.2)), then (\ref{eq:3.3.17})
holds for $\tau = t/n$ and Theorem \ref{th:3.3.5} proves the operator-norm convergent symmetrised
{Trotter-Kato} product formula:
\begin{equation}\label{T-K}
\|\cdot\|-{\lim_{n\to\infty}} \big(g(tB/n)^{1/2}f(tA/n)g(tB/n)^{1/2}\big)^n = e^{-tH} \, ,
\end{equation}
uniformly on $\dR^{+}_{0}$ with $O(1/n)$ as the rate of convergence, see \cite{IT01}, \cite{ITTZ01}.
There it was also shown that this rate is \textit{optimal}.

For proving convergence of the \textit{nonself-adjoint} Trotter-Kato approximants, for example
the simplest: $\{(f(tA/n)g(tB/n))^n\}_{n \geq 1}$, note that for $n \in \dN$ and $t \geq 0$:
\bed
(f(tA/n)g(tB/n))^n = f(tA/n)g(tB/n)^{1/2}F(t/n)^{n-1}g(tB/n)^{1/2} \ .
\eed
This representation yields:
\bed
\begin{split}
\|(f(tA/n)&g(tB/n))^n - e^{-tH}\| \le \|F(t/n)^{n-1} - e^{-tH}\| \\
&+ 2\|(\mathds{1}- g(tB/n))e^{-tH}\| + \|(\mathds{1}- f(tA/n))e^{-tH}\| \ .
\end{split}
\eed
Then by estimates (\ref{eq:3.3.23}) and (\ref{eq:2.1.14}) we get for $n-1 \geq 1$
\begin{equation}\label{eq:3.4.1}
\| F(t/n)^{n-1} - e^{-tH}\| \le ({\widehat{c}}^{\ \dR^+}_1 + K) \ \frac{1}{n} \ , \
\ t\geq 0.
\end{equation}
On the other hand, since $f,g \in {{\mathcal{K}}}$ and $H = A + B$:
\begin{equation}\label{eq:3.4.2}
\|(\mathds{1}- f(t A/n))e^{-t H}\|\leq C^A_C \ \gamma[f]\ \frac{1}{n} \quad \mbox{and}
\quad \|(\mathds{1}- g(t B/n))e^{-t H}\|\leq C^B_C \ \gamma[g] \ \frac{1}{n} \ ,
\end{equation}
where $\gamma[f]: = \sup_{x > 0}(1 - f(x))/{x}$ and similar for $g$.
Inequalities \eqref{eq:3.4.1} and \eqref{eq:3.4.2} yield for some $\Gamma >0$ the estimate
\bed
\|(f(tA/n)g(tB/n))^n - e^{-tC}\| \le \Gamma \ \frac{1}{n} \ ,
\eed
which proves the asymptotic $O(1/n)$ for $n \rightarrow \infty $.

\bigskip
\textit{This paper is dedicated to the memory of Hagen Neidhardt passed away on 23 March 2019.
I am deeply grateful to Hagen for valuable discussions on the subjects of this and of many
others of my projects.}

\begin{thebibliography}{ITTZ01}

\bibitem[ArZ10]{ArZ10}
Yu. Arlinski\u{i} and V. Zagrebnov,
\emph{Numerical range and quasi-sectorial contractions}, J. Math. Anal. Appl. \textbf{366} (2010),
33–-43.

\bibitem[But19]{But19}
Ya. A. Butko, \emph{The method of Chernoff approximation},
arXiv:submit/2697593 [math.FA], 21 May 2019, 1--22.


\bibitem[CZ01]{CZ01}
V.~Cachia and V.~A. Zagrebnov, \emph{Operator-norm approximation of semigroups by quasi-sectorial
  contractions}, J. Funct. Anal. \textbf{180} (2001), 176--194.

\bibitem[Che68]{Che68}
P. R. Chernoff, \emph{Note on product formulas for operator semigroups}, J.
  Funct. Anal. \textbf{2} (1968), 238--242.

\bibitem[Che74]{Che74}
P. R. Chernoff, \emph{Product formulas, nonlinear semigroups and addition of unbounded
  operators}, Memoirs Amer. Math. Soc. \textbf{140} (1974), 1--121.

\bibitem[EN00]{EN00}
K.-J. Engel and R.~Nagel, \emph{One-parameter Semigroups for Linear Evolution
  Equations}, Springer-Verlag, Berlin, 2000.

\bibitem[IT01]{IT01}
T.~Ichinose, H. Tamura, \emph{The norm convergence of the
  {T}rotter-{K}ato product formula with error bound}, Commun. Math. Phys.
  \textbf{217} (2001), 489--502.

\bibitem[ITTZ01]{ITTZ01}
T.~Ichinose, Hideo Tamura, Hiroshi~Tamura, and V.~A. Zagrebnov, \emph{Note on the
  paper ``The norm convergence of the {T}rotter-{K}ato product formula with
  error bound'' by {I}chinose and {T}amura}, Commun. Math. Phys. \textbf{221}
  (2001), 499--510.

\bibitem[Kat80]{Kat80}
T.~Kato, \emph{Perturbation Theory for Linear Operators}, Springer-Verlag,
Berlin, 1980, Corrected Printing of the Second Edition.

\bibitem[NZ98]{NZ98}
H.~Neidhardt and V.~A. Zagrebnov, \emph{On error estimates for the {T}rotter-{K}ato product formula},
Lett. Math. Phys. \textbf{44} (1998), 169--186.

\bibitem[NZ99a]{NZ99a}
H.~Neidhardt and V.~A. Zagrebnov, \emph{{T}rotter-{K}ato product formula and operator-norm convergence},
Commun. Math. Phys. \textbf{205} (1999), 129--159.

\bibitem[NZ99]{NZ99}
H.~Neidhardt and V.~A. Zagrebnov, \emph{{T}rotter-{K}ato product formula and symmetrically normed
ideals}, J. Funct. Anal. \textbf{167} (1999), 113--167.


\bibitem[Zag08]{Zag08}
V. A. Zagrebnov, \emph{Quasi-sectorial contractions},
J. Funct. Anal. \textbf{254} (2008), 2503-2511.

\bibitem[Zag17]{Zag17}
V. A. Zagrebnov, \emph{Comments on the Chernoff $\sqrt{n}$-lemma}, Functional Analysis
and Operator Theory for Quantum Physics (The Pavel Exner Anniversary Volume),
European Mathematical Society, Z\"{u}rich, 2017,  pp.~565--573.

\bibitem[Zag19]{Zag19}
V. A. Zagrebnov, \emph{Gibbs Semigroups}, Operator Theory Series: Advances and Applications, Vol. 273,
Bikh\"{a}user - Springer, Basel 2019.

\end{thebibliography}
\vspace{1cm}


\end{document}